\documentclass[12pt]{amsart}

\setlength{\parskip}{6pt}
\setlength{\parindent}{0pt}

\usepackage{tgpagella}
\usepackage{euler}
\usepackage[T1]{fontenc}
\usepackage{amsmath, amssymb}
\usepackage[hidelinks]{hyperref}
\usepackage[english]{babel}
\usepackage{mathrsfs}
\usepackage{eucal}
\usepackage[all]{xy}
\usepackage{tikz}

\newtheorem{thm}{Theorem}[section]
\newtheorem*{thm*}{Theorem}
\newtheorem{lem}[thm]{Lemma}
\newtheorem{fact}[thm]{Fact}

\newtheorem{prop}[thm]{Proposition}
\newtheorem*{prop*}{Proposition}

\newtheorem{cor}[thm]{Corollary}
\newtheorem*{cor*}{Corollary}

\theoremstyle{definition}
\newtheorem{defn}[thm]{Definition}
\newtheorem*{defn*}{Definition}

\newtheorem{remark}[thm]{Remark}

\newtheorem*{question*}{Question}
\newtheorem*{Pquestion*}{Popa's question}

\newtheorem*{conv*}{Convention}

\def\cal{\mathcal}

\def\fin{\Subset}

\def \starR{{}^{*}\mathbb R}

\makeatletter

\def\dotminussym#1#2{%
  \setbox0=\hbox{$\m@th#1-$}%
  \kern.5\wd0%
  \hbox to 0pt{\hss\hbox{$\m@th#1-$}\hss}%
  \raise.6\ht0\hbox to 0pt{\hss$\m@th#1.$\hss}%
  \kern.5\wd0}

\DeclareMathOperator{\id}{id}



\def \cirB{{}^{\circ}B}
\def \cirA{{}^{\circ}A}
\def \cirH{{}^{\circ}H}
\def \cirP{{}^{\circ}P}
\def \cirx{{}^{\circ}x}
\def \ciry{{}^{\circ}y}
\def \fin{\operatorname{fin}}
\def \st{\operatorname{st}}
\def \bst{\boldsymbol{\operatorname{st}}}
\def \bor{\operatorname{Borel}}
\def \loeb{\operatorname{Loeb}}
\def \id{\operatorname{id}}

\textwidth 5.75in
\oddsidemargin 0.375in
\evensidemargin 0.375in


\begin{document}


\title[A nonstandard proof of the spectral theorem for unbounded operators]{A nonstandard proof of the spectral theorem for unbounded self-adjoint operators}
\author{Isaac Goldbring}
\thanks{The author was partially supported by NSF CAREER grant DMS-1349399.}

\address{Department of Mathematics\\University of California, Irvine, 340 Rowland Hall (Bldg.\# 400),
Irvine, CA 92697-3875}
\email{isaac@math.uci.edu}
\urladdr{http://www.math.uci.edu/~isaac}

\maketitle

\begin{abstract}
We generalize Moore's nonstandard proof of the Spectral theorem for bounded self-adjoint operators to the case of unbounded operators.  The key step is to use a definition of the nonstandard hull of an internally bounded self-adjoint operator due to Raab.
\end{abstract}

\section{Introduction}

Throughout this note, all Hilbert spaces will be over the complex numbers and (following the convention in physics) inner products are conjugate-linear in the \emph{first} coordinate.

The goal of this note is to provide a nonstandard proof of the \textbf{Spectral theorem for unbounded self-adjoint operators}:

\begin{thm}\label{maintheorem}
Suppose that $A$ is an unbounded self-adjoint operator on the Hilbert space $\cal H$.  Then there is a projection-valued measure $P:\bor(\mathbb R)\to \cal B(\cal H)$ such that $$A=\int_{\mathbb R}\id dP.$$
\end{thm}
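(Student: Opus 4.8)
The plan is to realize $A$ as the nonstandard hull, in Raab's sense, of a suitable internally self-adjoint operator on a hyperfinite-dimensional internal inner product space, and then to produce $P$ by pushing forward that operator's internal spectral decomposition under the standard part map. I work in an $\aleph_1$-saturated nonstandard extension. Since $D(A)$ is separable and dense in the graph norm $x\mapsto \|x\|+\|Ax\|$, I fix a sequence $(x_k)_{k\in\mathbb{N}}$ in $D(A)$ that is dense both in $\mathcal H$ and in $D(A)$ for the graph norm. By saturation I choose a hyperfinite-dimensional internal subspace $H\subseteq {}^*D(A)$ with $\{{}^*x_k,\ {}^*(Ax_k) : k\in\mathbb{N}\}\subseteq H$. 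Writing $P_H$ for the internal orthogonal projection onto $H$, set $T := P_H\,{}^*\!A\,P_H|_H$. Because $A$ is symmetric and $H\subseteq{}^*D(A)$, the operator $T$ is internally symmetric on the hyperfinite-dimensional $H$, hence internally self-adjoint. The payoff of the saturation step is that $T$ agrees with ${}^*\!A$ on the chosen vectors: since ${}^*x_k$ and ${}^*(Ax_k)$ both lie in $H$, one computes $T\,{}^*x_k = P_H\,{}^*\!A\,{}^*x_k = P_H\,{}^*(Ax_k) = {}^*(Ax_k)$ exactly, for every $k$.

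Next I pass to nonstandard hulls. The hull $\hat H$ of $H$ is a Hilbert space into which $\mathcal H$ embeds isometrically via $x\mapsto [{}^*x]$ (immediate for $x=x_k$, and general $x$ by density). Applying Raab's definition, the nonstandard hull $\hat T$, with domain $\{[\xi] : \xi\in\fin(H),\ T\xi\in\fin(H)\}$ and $\hat T[\xi] = [T\xi]$, is a self-adjoint operator on $\hat H$. The identity $T\,{}^*x_k = {}^*(Ax_k)$ gives $[{}^*x_k]\in D(\hat T)$ with $\hat T[{}^*x_k] = [{}^*(Ax_k)]$, and since $\hat T$ is closed while $(x_k)$ is graph-dense in $D(A)$, it follows that $\hat T$ extends $A$ under the embedding. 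As $A$ is self-adjoint it has no proper symmetric extension, so $A$ is recovered as the part of $\hat T$ acting on $\mathcal H$; this is the conceptual heart of the argument and is exactly where Raab's domain prescription is needed.

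To produce $P$, I diagonalize $T$ internally: there is an internal orthonormal basis $(f_j)_{j\le N}$ of $H$ with $Tf_j = \lambda_j f_j$, $\lambda_j\in{}^*\mathbb{R}$, yielding an internal projection-valued measure $E$ on the internal Borel subsets of ${}^*\mathbb{R}$. For $x,y\in\mathcal H$ the internal complex measure $S\mapsto \langle P_H{}^*x,\ E(S)P_H{}^*y\rangle$ has limited total variation, and pushing it forward under $\st$ and taking standard parts yields, through the associated Loeb measure, a regular complex Borel measure $m_{x,y}$ on $\mathbb{R}$. Sesquilinearity and boundedness of $(x,y)\mapsto m_{x,y}(B)$ allow me to define $P(B)$ by $\langle x, P(B)y\rangle = m_{x,y}(B)$ and to check directly that $P$ is a projection-valued measure. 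Then $A = \int_{\mathbb R}\id\,dP$ is verified on the dense set by computing, for each $k,\ell$, $$\langle x_k, Ax_\ell\rangle = \st\langle {}^*x_k, T\,{}^*x_\ell\rangle = \st\!\int_{{}^*\mathbb{R}} t\,d\langle {}^*x_k, E\,{}^*x_\ell\rangle = \int_{\mathbb{R}} t\,dm_{x_k,x_\ell}(t),$$ after which self-adjointness and the matching of domains extend the identity to all of $D(A)$.

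I expect the main obstacle to be controlling the infinite eigenvalues, that is, ruling out \emph{mass at infinity}. The pushforward of $E$ a priori lives on $\mathbb{R}\cup\{\pm\infty\}$, so I must show $m_{x,x}(\{\pm\infty\}) = 0$ for every $x\in\mathcal H$, equivalently that the component of $P_H{}^*x$ along eigenvectors with infinite eigenvalue is infinitesimal. For $x\in D(A)$ this follows from $\|T P_H{}^*x\|^2 = \sum_j \lambda_j^2|\langle P_H{}^*x, f_j\rangle|^2$ being limited, which forces $\sum_{\lambda_j\ \mathrm{infinite}}|\langle P_H{}^*x, f_j\rangle|^2\approx 0$; the general case follows by density. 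This finiteness phenomenon, absent in the bounded setting where every $\lambda_j$ is limited, is precisely what Raab's notion of nonstandard hull is designed to exploit.
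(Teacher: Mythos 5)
Your overall route---a hyperfinite-dimensional $H\subseteq {}^*D(A)$, an internally self-adjoint compression $T$ of ${}^*A$, a hull of $T$ in Raab's sense, a pushforward of the internal spectral measure under the standard part map, and finally restriction to $\cal H$ via Fact \ref{noextensions}---is the same as the paper's. But there is a genuine gap at the core: what you call ``Raab's definition'' is not Raab's definition, and the operator you describe does not exist. Since $A$ is unbounded, $T$ has eigenvectors with infinite eigenvalues, and the assignment $[\xi]\mapsto[T\xi]$ on $\{[\xi]:\xi\in\fin(H),\ T\xi\in\fin(H)\}$ is not well defined: if $f$ is a unit eigenvector with infinite eigenvalue $\lambda$, then $\xi:=\lambda^{-1}f$ satisfies $\|\xi\|\approx 0$ and $T\xi=f$, so $[\xi]=0$ while $[T\xi]=[f]\neq 0$; the would-be graph contains the pair $(0,[f])$ and hence is not the graph of any operator. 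Raab's actual definition (the paper's Section 4) is spectral: $\cirB:=\int_{\mathbb R}\id\,dQ$ with $Q=\bst_*\cirP$, and the correct relation to your formula is Lemma \ref{nshullformula}: when $x$ and $Bx$ are both finite, $\cirB\cirx=\cirP_{\fin(\starR)}{}^{\circ}(Bx)$, with an extra projection that kills spectral mass at infinite eigenvalues. This defect propagates into your extension step: the identity $T\,{}^*x_k={}^*(Ax_k)$ only yields $\cirB[{}^*x_k]=\cirP_{\fin(\starR)}[{}^*(Ax_k)]$, and one must still prove that this projection acts as the identity on $[{}^*(Ax_k)]$ before your closed-graph argument can run. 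Your final paragraph does not supply this: it shows the mass at infinity of $\mu^x$ vanishes, but what is needed is the vanishing of the mass at infinity of $\mu^{Bx}$ (the spectral distribution of the \emph{image} vector), and that does not follow from finiteness of $\sum_j\lambda_j^2|\langle P_H{}^*x,f_j\rangle|^2$ alone (a single infinite $\lambda_j$ with $|\langle P_H{}^*x,f_j\rangle|=\lambda_j^{-1}$ is consistent with your estimate yet contributes unit mass to $\mu^{Bx}$). The paper closes exactly this hole with the norm computation $\|Ax\|\leq\|\cirA^H(x)\|=\|\cirP_{\fin(\starR)}{}^{\circ}(A^Hx)\|\leq\|{}^{\circ}(A^Hx)\|=\|Ax\|$, which forces the projection to act trivially.

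A second, smaller gap: you define $P(B)$ on $\cal H$ by compressing the hull's spectral projections and say one can ``check directly'' that $P$ is a projection-valued measure. The compression of a projection to a subspace is a projection only when the subspace is invariant, so this step requires knowing that $P_{\cal H}$ commutes with the spectral projections of the hull, equivalently that $\cal H$ is reducing. That is precisely Proposition \ref{inducedpvm}(1), which the paper proves by a resolvent/exponential approximation argument specifically because the usual textbook proof of this commutation invokes the spectral theorem and would be circular here. (Also minor: you assume $D(A)$ is separable, which the theorem does not; the paper avoids this by requiring $D(A)\subseteq H\subseteq{}^*D(A)$ rather than capturing only a countable graph-dense set.)
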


Here, $\bor(\mathbb R)$ denotes the $\sigma$-algebra of Borel subsets of $\mathbb R$ and $\id$ denotes the identity function on $\mathbb R$.  All of the terms appearing in the previous theorem will be defined precisely in the next section.  We refer to the expression $A=\int_{\mathbb R}\id dP$ as a \textbf{spectral resolution} of $A$.  Thus, the theorem says that every unbounded self-adjoint operator admits a spectral resolution.  In fact, this spectral resolution is unique (that is, the projection-valued measure yielding the resolution is unique), although we will not address the uniqueness issue here.  The Spectral theorem for unbounded operators is especially important in quantum mechanics, for it provides one with the probability distributions for measuring observables with continuous spectra such as position and momentum.

The simplest version of the above Spectral theorem is the case when $\cal H$ is finite-dimensional.  In that case, there is a finite set $\lambda_1,\ldots,\lambda_n$ of distinct eigenvalues of $A$ such that, denoting the projection onto the eigenspace corresponding to $\lambda_i$ by $P_i$, the spectral resolution of $A$ is given by $A=\sum_{i=1}^n \lambda_i P_i$.  

In \cite{moore}, Moore gave a very simple nonstandard proof of the Spectral theorem for \emph{bounded} self-adjoint operators.  The idea of the proof is as follows.  (This is not literally how Moore phrased things but what follows is morally equivalent to his argument.)  First, one embeds $\cal H$ in a \emph{hyperfinite-dimensional} Hilbert space $H$ and extends $A$ to an internally bounded operator $B$ on $H$.  By transferring the finite-dimensional version of the Spectral theorem stated in the previous paragraph, we have an internal spectral resolution of $B$ into a hyperfinite sum of eigenvalues multiplied by the projections onto the corresponding eigenspaces.  By quotienting out by elements of infinitesimal norm, one obtains a standard Hilbert space $\cirH$, called the \textbf{nonstandard hull} of $H$, which still contains the original Hilbert space $\cal H$.  It then follows from the fact that the internal operator norm of $B$ is finite that $B$ descends to a bounded linear operator $\cirB$ on $\cirH$, called the nonstandard hull of $B$, which still extends $A$.  Moreover, one can convert the internal projection-valued measure which yields the spectral resolution of $B$ into an actual projection-valued measure except that its domain is not $\bor(\mathbb R)$, but rather $\loeb(\starR)$, the $\sigma$-algebra generated by the internally Borel subsets of $\starR$.  However, by ``pushing forward'' this projection-valued measure using the standard part map, one obtains an actual projection-valued measure, which one can then check yields a spectral resolution of $\cirB$.  By composing this projection-valued measure with the projection mapping $\cirH$ onto $\cal H$, one obtains a spectral resolution of the original operator $A$.  (The reader familiar with nonstandard integration theory might notice the similarities  between this construction and the nonstandard construction of the Lebesgue integral from an appropriate hyperfinite sum.)

It is the purpose of this note to extend the argument in the preceding paragraph to the case that the operator $A$ is unbounded.  The main difficulty in extending Moore's argument is the question of how one defines the map $\cirB$ on $\cirH$.  However, in \cite{raab}, Raab develops a nonstandard approach to quantum mechanics in which a definition of $\cirB$ is given for internally bounded (but not necessarily finitely bounded) self-adjoint operators $B$.  Equipped with this definition, it is possible to adapt Moore's argument to the unbounded case.  We stress that essentially all of the ingredients needed in our proof are present in Raab's article.  Our main contribution is simply to point out the applicability of his work to the Spectral theorem for unbounded operators.  In addition, we take this opportunity to give more detailed proofs of claims made in \cite{raab} for which we felt (in our humble opinion) extra detail would be warranted.

We believe that the nonstandard proof of Theorem \ref{maintheorem} is technically easier and pedagogically superior to the classical proof.  Classically, Theorem \ref{maintheorem} is proven by using the \textbf{Cayley transform} to reduce to the case of the Spectral theorem for bounded \emph{normal} operators.  The proof for normal operators offers some challenges not apparent in the self-adjoint case and the use of the Cayley transform can be hard to motivate.  However, the nonstandard proof of the unbounded case only relies on the \emph{finite-dimensional} case and some fairly elementary nonstandard integration theory.  In particular, it does not rely on any version of the bounded case to be proven first.

In Section 2, we recall some basic facts about unbounded operators and projection-valued measures.  Of particular importance to us are the notions of restrictions of projection-valued measures and pushforwards of projection-valued measures.  While these notions are quite natural, we were unable to find precise references for them in the literature and so we take this opportunity to spell out some details of these constructions.  This section also contains the requisite nonstandard measure and integration theory needed for the rest of the paper.  Section 3 contains a precise proof of the bounded case along the lines of the sketch above.  We reiterate that our proof of the unbounded situation does not rely on the bounded case; we merely include the proof of the bounded case as motivation for Section 4, where the nonstandard hull of an internally bounded self-adjoint operator is defined and where two important facts about this definition are proven.  Finally, in Section 5, we use the material in Section 4 to adapt Moore's argument to the unbounded case.

In order to keep this note somewhat short, we assume that the reader is familiar with some basic nonstandard analysis.  The unacquainted reader can consult \cite{gold} for a quick introduction.  (The reader will notice that in \cite{gold} nonstandard extensions are adorned with stars on the right-hand side whereas in this article the stars are on the more traditional left-hand side; the reason for this is due to the large number of appearances of adjoint operators in this article.)

We would like to thank C. Ward Henson and David Ross for their help in finding useful references and to Giorgio Metafune and Andreas Raab for helpful discussions regarding this work.

\section{Some reminders, standard and otherwise}

\subsection{Unbounded operators}

Fix a Hilbert space $\cal H$.  An \textbf{unbounded operator} on $\cal H$ is a linear operator $A:D(A)\to \cal H$, where $D(A)$ is a dense subspace of $\cal H$ called the \textbf{domain} of $A$. (We follow the unfortunate convention that an unbounded operator may indeed be bounded, that is, unbounded really means not necessarily bounded.) If $B$ is also an unbounded operator on $\cal H$, we say that $B$ is an \textbf{extension} of $A$, written $A\subseteq B$, if $D(A)\subseteq D(B)$ and $Bx=Ax$ for all $x\in D(A)$.  In fact, we extend this terminology and notation to cover the case that $A$ is an unbounded operator on $\cal H$, $B$ is an unbounded operator on $\cal K$, where $\cal H$ is a closed subspace of $\cal K$, and $D(A)\subseteq D(B)$ and $Bx=Ax$ for all $x\in D(A)$. 

If $A$ is an unbounded operator on $\cal H$, we define the \textbf{adjoint of $A$} to be the linear operator $A^*$ defined as follows.  The domain $D(A^*)$ of $A^*$ consists of all $x\in \cal H$ for which the map $y\mapsto \langle x,Ay\rangle:D(A)\to \mathbb{C}$ is bounded, meaning that there is $K\geq 0$ such that $|\langle x,Ay\rangle|\leq K\|y\|$ for all $y\in D(A)$.  If $x\in D(A^*)$, then $A^*x$ is the unique element of $\cal H$ for which $\langle A^*x,y\rangle=\langle x,Ay\rangle$ for all $y\in D(A)$.  It is not always the case that $A^*$ is an unbounded operator on $\cal H$, that is, $D(A^*)$ is not always dense in $\cal H$; this happens if and only if $A$ is \textbf{closable}, meaning the closure of the graph of $A$ (as a subset of $\cal H\oplus \cal H$) is the graph of an unbounded operator on some (necessarily dense) subspace of $\cal H$.  For the unbounded operators of interest in this paper (namely the symmetric and self-adjoint operators defined in the next paragraph), this is always the case.

The unbounded operator $A$ on $\cal H$ is said to be \textbf{self-adjoint} if $A=A^*$ as unbounded operators on $\cal H$.  (This includes assuming that $D(A)=D(A^*)$.)  An unbounded operator $A$ is \textbf{symmetric} if $\langle Ax,y\rangle=\langle x,Ay\rangle$ for all $x,y\in D(A)$.  Equivalently, $A$ is symmetric if and only if $A^*$ is an extension of $A$.  In particular, self-adjoint operators are symmetric.  On the other hand, a symmetric operator $A$ is self-adjoint if and only if $D(A)=D(A^*)$.  

The following fact will be used later on in this note; see \cite[Proposition X.2.11(c)]{conway}.

\begin{fact}\label{noextensions}
Self-adjoint operators have no proper symmetric extensions (on the same Hilbert space).
\end{fact}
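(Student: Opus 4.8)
The plan is to exploit the order-reversing behavior of the adjoint operation together with the definition of self-adjointness and symmetry. Let $A$ be self-adjoint, so that $A=A^*$, and suppose $B$ is a symmetric operator on the same Hilbert space $\cal H$ with $A\subseteq B$. Symmetry of $B$ means precisely that $B\subseteq B^*$. The goal is to show $B\subseteq A$, which combined with the hypothesis $A\subseteq B$ forces $A=B$, ruling out any proper symmetric extension.

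The one ingredient I would establish first is the elementary but crucial fact that the adjoint reverses inclusions: if $A\subseteq B$, then $B^*\subseteq A^*$. To see this, take $x\in D(B^*)$, so that the map $y\mapsto \langle x,By\rangle$ is bounded on $D(B)$. Since $D(A)\subseteq D(B)$ and $By=Ay$ for $y\in D(A)$, the restriction $y\mapsto\langle x,Ay\rangle$ is then bounded on $D(A)$, whence $x\in D(A^*)$. Moreover, for every $y\in D(A)$ we have $\langle A^*x,y\rangle=\langle x,Ay\rangle=\langle x,By\rangle=\langle B^*x,y\rangle$, and density of $D(A)$ gives $A^*x=B^*x$. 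Thus $B^*\subseteq A^*$.

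With this in hand the result follows by simply chaining the inclusions. From $A\subseteq B$ and the order-reversing property we get $B^*\subseteq A^*$; combining this with the symmetry inclusion $B\subseteq B^*$ and the self-adjointness identity $A^*=A$ yields
$$A\subseteq B\subseteq B^*\subseteq A^*=A,$$
so every inclusion is an equality and in particular $A=B$.

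I do not anticipate a genuine obstacle here: the entire content is the order-reversing property of the adjoint, and once that is recorded the conclusion is immediate from the displayed chain. The only point requiring a small amount of care is the verification that $x\in D(B^*)$ actually lands in $D(A^*)$ (rather than merely comparing the two operators on a common domain), since self-adjointness crucially includes the equality of domains $D(A)=D(A^*)$; it is exactly this domain equality that closes the chain and prevents $B$ from being strictly larger than $A$.
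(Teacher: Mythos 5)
Your proof is correct. The paper does not prove this fact itself but cites \cite[Proposition X.2.11(c)]{conway}, and your argument --- establishing that taking adjoints reverses extension ($A\subseteq B$ implies $B^*\subseteq A^*$) and then chaining $A\subseteq B\subseteq B^*\subseteq A^*=A$ --- is exactly the standard argument underlying that reference, with the key domain-membership verification ($x\in D(B^*)$ implies $x\in D(A^*)$) carried out carefully and correctly under the paper's convention for the adjoint.
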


If $B$ is a self-adjoint unbounded operator on some Hilbert space $\cal K$ and $\cal H$ is a closed subspace of $\cal K$, we say that $\cal H$ is \textbf{reducing for $B$} if $P_{\cal H}B\subseteq BP_{\cal H}$, where $P_{\cal H}\in \cal B(\cal K)$ is the orthogonal projection onto $\cal H$.  To be precise, this means:  if $x\in D(B)$, then $P_{\cal H}x\in D(B)$ and $P_{\cal H}(Bx)=B(P_{\cal H}x)$. If $\cal H$ is reducing for $B$, then we may define the unbounded operator $B|\cal H$ on $\cal H$ with domain $D(B|\cal H):=D(B)\cap \cal H$ and which acts like $B$ on its domain.  It is routine to verify that $B|\cal H$ is a self-adjoint operator on $\cal H$.

The following is probably well-known, but since we could not find a reference, we include a proof here:

\begin{prop}\label{selfadjointext}
Suppose that $\cal H$ is a closed subspace of $\cal K$ and that $A$ and $B$ are self-adjoint operators on $\cal H$ and $\cal K$ respectively for which $A\subseteq B$.  Then $\cal H$ is reducing for $B$ and $A=B|\cal H$.
\end{prop}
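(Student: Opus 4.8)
The plan is to pass from the (possibly unbounded) operators $A$ and $B$ to their bounded resolvents and to argue entirely with these. Recall the basic criterion for self-adjointness: since $B$ is self-adjoint on $\cal K$, the operators $B\pm i$ map $D(B)$ bijectively onto $\cal K$ and their inverses $R_\mp:=(B\mp i)^{-1}\in\cal B(\cal K)$ are bounded and everywhere defined; likewise $A\pm i$ map $D(A)$ bijectively onto $\cal H$, with bounded inverses in $\cal B(\cal H)$. Writing $P:=P_{\cal H}$, the heart of the matter will be to show that $R_\mp$ leaves $\cal H$ invariant and in fact commutes with $P$; granting this, reducibility and the identity $A=B|\cal H$ both fall out quickly.

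To see that $R_-=(B-i)^{-1}$ maps $\cal H$ into $\cal H$, I would take $h\in\cal H$ and set $x:=(A-i)^{-1}h\in D(A)$. Since $A\subseteq B$, we have $(B-i)x=(A-i)x=h$, whence $R_-h=x\in D(A)\subseteq\cal H$; thus $R_-|\cal H=(A-i)^{-1}$ and $R_-(\cal H)\subseteq\cal H$. The same computation with $+i$ in place of $-i$ shows $R_+(\cal H)\subseteq\cal H$. Now $R_+$ is precisely the Hilbert-space adjoint of $R_-$, because for self-adjoint $B$ one has $\big((B-i)^{-1}\big)^*=(B^*+i)^{-1}=(B+i)^{-1}$. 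Having both $R_-(\cal H)\subseteq\cal H$ and $R_-^*(\cal H)\subseteq\cal H$, a standard two-line argument (from $R_-(\cal H)\subseteq\cal H$ one gets $PR_-P=R_-P$, and taking adjoints of $R_-^*(\cal H)\subseteq\cal H$ gives $PR_-P=PR_-$) yields $PR_-=R_-P$.

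Finally I would convert the commutation $PR_-=R_-P$ into reducibility. Given $x\in D(B)$, write $y:=(B-i)x$, so $x=R_-y$ and $Px=R_-(Py)\in\operatorname{ran}R_-=D(B)$; thus $P$ maps $D(B)$ into $D(B)$. Using the identity $BR_-=\id+iR_-$ on $\cal K$, a direct computation gives $B(Px)=Py+iR_-(Py)=Py+iPR_-y=P(Bx)$, so $P_{\cal H}B\subseteq BP_{\cal H}$ and $\cal H$ is reducing for $B$. Then $B|\cal H$ is a self-adjoint operator on $\cal H$ (as noted just above the statement), and since $D(A)\subseteq D(B)\cap\cal H=D(B|\cal H)$ with the two operators agreeing there, $B|\cal H$ is a symmetric extension of the self-adjoint operator $A$ on the same space $\cal H$; by Fact \ref{noextensions} it must equal $A$. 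I expect the only real obstacle to be the bookkeeping in the resolvent step---keeping straight which inverse lives on which space and verifying the adjoint identity $R_-^*=R_+$---together with the appeal to the basic self-adjointness criterion, which should perhaps be recalled explicitly since it is the one ingredient not developed in the preceding pages.
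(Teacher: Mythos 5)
Your proof is correct, but it takes a genuinely different route from the paper's. The paper argues directly from the definition of the adjoint: given $x\in D(B)$, it shows $P_{\cal H}x\in D(A^*)=D(A)$ by observing that the functional $y\mapsto\langle P_{\cal H}x,Ay\rangle=\langle x,By\rangle$ is bounded on $D(A)$, and then a density argument shows $Bx-A(P_{\cal H}x)\in\cal H^\perp$, which gives $P_{\cal H}(Bx)=B(P_{\cal H}x)$; the conclusion via Fact \ref{noextensions} is the same as yours. You instead pass to the resolvents $R_\mp=(B\mp i)^{-1}$, prove that $P_{\cal H}$ commutes with $R_-$ (using $R_-|\cal H=(A-i)^{-1}$, the adjoint identity $R_-^*=R_+$, and the standard invariance-plus-adjoint argument), and then translate commutation with the resolvent back into reducibility via $BR_-=I+iR_-$. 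Each step of yours checks out, including the computation $B(P_{\cal H}x)=P_{\cal H}y+iR_-(P_{\cal H}y)=P_{\cal H}(Bx)$ where $y=(B-i)x$. What the paper's argument buys is self-containedness: it uses nothing beyond the definition of $D(A^*)$ and Fact \ref{noextensions}, whereas you must import the basic self-adjointness criterion (that $B\pm i$ map $D(B)$ bijectively onto $\cal K$ with bounded inverses), a standard fact but one the paper never states or needs. What your argument buys is that all the unbounded-operator bookkeeping is reduced to algebra with bounded operators, and it makes explicit the equivalence between reducibility and commutation with the resolvent --- the very fact the paper quietly invokes in the opposite direction in its proof sketch of Proposition \ref{inducedpvm}, where reducibility is used to deduce that $P_{\cal H}$ commutes with $(B-tiI)^{-1}$. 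So your route is arguably the more systematic one, at the cost of one extra external ingredient, which you rightly flag as needing to be recalled explicitly.
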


\begin{proof}
First, fix $x\in D(B)$; we show that $P_{\cal H}x\in D(B)$.  By the assumptions of the proposition, it suffices to show that $P_{\cal H}x\in D(A^*)$.  Since $x\in D(B^*)$, we know that the map $y\mapsto \langle x,By\rangle$ is bounded on $D(B)$ and thus on $D(A)$ as well.  Since $\langle P_{\cal H}x,Ay\rangle=\langle x,Ay\rangle=\langle x,By\rangle$ for all $y\in D(A)$, we see that $y\mapsto \langle P_{\cal H}x,Ay\rangle$ is bounded on $D(A)$ and thus $P_{\cal H}x\in D(A^*)$, as desired.

We continue to assume that $x\in D(B)$ and show that $P_{\cal H}(Bx)=B(P_{\cal H}x)$.  By the argument in the previous paragraph, we know that $P_{\cal H}x\in D(A)$, so $B(P_{\cal H}x)=A(P_{\cal H}x)$.  Fix $y\in D(A)$.  We then have
$$\langle y,Bx-A(P_{\cal H}x)\rangle=\langle By,x-P_{\cal H}x\rangle=\langle Ay,x-P_{\cal H}x\rangle=0.$$
Since $D(A)$ is dense in $\cal H$, we see that $Bx-A(P_{\cal H}x)\in \cal H^\perp$, whence it follows that $P_{\cal H}(Bx)=A(P_{\cal H}x)=B(P_{\cal H}x)$, as desired.

We have shown that $\cal H$ is reducing for $B$, whence $B|\cal H$ is a self-adjoint operator on $\cal H$.  Since $B|\cal H$ clearly extends $A$, we conclude by Fact \ref{noextensions}.
\end{proof}

\subsection{Projection-valued measures}

\begin{defn}
Fix a measurable space $(X,\frak B)$.  A \textbf{projection-valued measure} on $(X,\frak B)$ is a function $P:\frak B\to \cal B(\cal H)$, where $\cal H$ is a Hilbert space, satisfying the following properties:
\begin{enumerate}
    \item For each $\Omega\in \frak B$, $P_\Omega:=P(\Omega)$ is a projection on $\cal H$.
    \item $P_\emptyset=0$ and $P_X=I$.
    \item If $(\Omega_n)_{n\in \mathbb N}$ is a collection of disjoint elements of $\frak B$ and $\Omega=\bigcup_{n\in \mathbb N}\Omega_n$, then for all $x\in \cal H$, we have $P_\Omega(x)=\sum_{n\in \mathbb N}P_{\Omega_n}(x)$.
\end{enumerate}
\end{defn}

Fix a projection-valued measure $P:\frak B\to \cal B(\cal H)$ on $X$.  For all $x,y\in \cal H$, one can consider the complex measure $\mu^{x,y}$ on $X$ given by $\mu^{x,y}(\Omega):=\langle y,P_{\Omega}(x)\rangle$.  Moreover, $|\mu^{x,y}|\leq \|x\|\|y\|$; here $|\mu^{x,y}|$ denotes the total variation of $\mu^{x,y}$.  (See \cite[Lemma IX.1.9]{conway}.)  We set $\mu^x:=\mu^{x,x}$ and note that it is a positive measure on $X$ with $\mu^x(X)=\|x\|^2$.  

Given a measurable function $f:X\to \mathbb C$, one can define an unbounded operator $\int_X fdP$ on $\cal H$ as follows.  First, we set $D(\int_X fdP):=\{x\in \cal H \ : \ \int_X |f|^2d\mu^x<\infty\}$.  One can check that $D(\int_X fdP)$ is indeed a dense subspace of $\cal H$.  For $x\in D(\int_X fdP)$, one defines $(\int_X fdP)x$ so that the formula $\langle y,(\int_X fdP)x\rangle=\int_X fd\mu^{x,y}$ holds for all $y\in \cal H$.  We note that if $x\in D(\int_X fdP)$, then $f\in L^1(|\mu^{x,y}|)$ for all $y\in \cal H$.  Also, if $f$ is real-valued, then $\int_X fdP$ is a self-adjoint operator on $\cal H$.  (See \cite[Section X.4]{conway} for proofs of these facts.)

If $A$ is a self-adjoint operator on $\cal H$, then a \textbf{spectral measure for $A$} is a projection-valued measure $P:\bor(\mathbb R)\to \cal B(\cal H)$ such that $A=\int_{\mathbb R} \id dP$ (as unbounded operators on $\cal H$); we refer to the latter equation as a \textbf{spectral resolution of $A$}.  Theorem \ref{maintheorem} thus asserts that every self-adjoint operator admits a spectral measure.

The following result will be crucial for us.  Although it is probably well-known, we were unable to find an exact reference in the literature.  For that reason, we include a brief proof sketch.

\begin{prop}\label{inducedpvm}
Suppose that $B$ is a self-adjoint operator on $\cal K$ and $\cal H$ is a reducing subspace for $B$.  Suppose that $P:\bor(\mathbb R)\to \cal B(\cal K)$ is a spectral measure for $B$ and $P_{\cal H}$ is the projection onto $\cal H$.  Then:
\begin{enumerate}
    \item $P_{\cal H}$ commutes with $P_\Omega$ for every $\Omega\in \bor(\mathbb R)$.
    \item Setting $P|\cal H:\bor(\mathbb R)\to \cal B(\cal H)$ to be the function given by $(P|\cal H)(\Omega):=P_{\cal H}\circ (P_\Omega|\cal H)$, we have that $P|\cal H$ is a spectral measure for $B|\cal H$.
\end{enumerate}
\end{prop}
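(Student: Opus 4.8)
The plan is to establish (1) first, since once $P_{\cal H}$ is known to commute with every $P_\Omega$, the map $P|\cal H$ is visibly well behaved and (2) reduces to unwinding the definitions from Section 2. The place where the reducing hypothesis enters is the computation that $P_{\cal H}$ commutes with the resolvent $R(z):=(B-z)^{-1}\in\cal B(\cal K)$ for every $z\in\bC$ with $\operatorname{Im}z\neq 0$. Concretely, given $y\in\cal K$ I would set $x:=R(z)y\in D(B)$; since $\cal H$ reduces $B$ we have $P_{\cal H}x\in D(B)$ and $BP_{\cal H}x=P_{\cal H}Bx$, so that $(B-z)P_{\cal H}x=P_{\cal H}(B-z)x=P_{\cal H}y$, which rearranges to $P_{\cal H}R(z)y=R(z)P_{\cal H}y$. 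This is the one step that touches the delicate domain issues, and it is exactly where the hypothesis $P_{\cal H}B\subseteq BP_{\cal H}$ is used; everything afterward manipulates bounded operators only.

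To pass from the resolvents to the spectral projections I would use the functional calculus recorded in Section 2. The resolvents are the operators $\int_{\bR}(\lambda-z)^{-1}\,dP$, and the functions $\lambda\mapsto(\lambda-z)^{-1}$ (for non-real $z$) form a conjugation-closed, point-separating family that generates a uniformly dense subalgebra of the continuous functions on $\bR$ vanishing at infinity; equivalently, one may invoke Stone's formula, which recovers $P_{(a,b)}$ as an explicit limit of integrals of resolvents. Since $P_{\cal H}$ is bounded, commutation survives the relevant uniform and strong limits, giving $P_{\cal H}P_{(a,b)}=P_{(a,b)}P_{\cal H}$ for all intervals. Finally I would check that $\cal D:=\{\Omega\in\bor(\bR):P_{\cal H}P_\Omega=P_\Omega P_{\cal H}\}$ is a $\sigma$-algebra: it is closed under complements because $P_{\Omega^c}=I-P_\Omega$, under finite intersections because $P_{\Omega\cap\Omega'}=P_\Omega P_{\Omega'}$, and under countable increasing unions because $P_{\cal H}$ is continuous and such unions converge strongly (by countable additivity of $P$). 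Containing the intervals, $\cal D$ is therefore all of $\bor(\bR)$, proving (1).

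For (2), commutation makes each $P_{\cal H}P_\Omega=P_\Omega P_{\cal H}$ a projection on $\cal K$ (it is idempotent and self-adjoint) whose range lies in $\cal H$, so its restriction $(P|\cal H)(\Omega)=P_{\cal H}P_\Omega|_{\cal H}$ is a genuine projection on $\cal H$. The normalizations $(P|\cal H)(\emptyset)=0$ and $(P|\cal H)(\bR)=\id_{\cal H}$ are immediate, and countable additivity transfers from $P$ using the continuity of $P_{\cal H}$, so $P|\cal H$ is a projection-valued measure on $\cal H$. To verify the spectral resolution $B|\cal H=\int_{\bR}\id\,d(P|\cal H)$ I would simply compare the underlying scalar measures: for $x,y\in\cal H$ one has $\langle y,(P|\cal H)(\Omega)x\rangle=\langle P_{\cal H}y,P_\Omega x\rangle=\langle y,P_\Omega x\rangle=\mu^{x,y}(\Omega)$, so the complex measures attached to $P|\cal H$ agree with those of $P$ on vectors from $\cal H$. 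Consequently the domain $\{x\in\cal H:\int\lambda^2\,d\mu^x<\infty\}$ of $\int_{\bR}\id\,d(P|\cal H)$ is exactly $D(B)\cap\cal H=D(B|\cal H)$, and for $x$ in this common domain and all $y\in\cal H$, $\langle y,(\int_{\bR}\id\,d(P|\cal H))x\rangle=\int\lambda\,d\mu^{x,y}=\langle y,Bx\rangle=\langle y,(B|\cal H)x\rangle$; since both operators take values in $\cal H$, this forces equality. The main obstacle lies entirely in part (1) — part (2) is bookkeeping once commutation is in hand — and within (1) the crux is the resolvent computation, which is where the unbounded operator $B$ and its domain must be handled carefully rather than manipulated formally.
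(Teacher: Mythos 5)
Your proposal is correct, and it shares with the paper's proof the crucial first move: in both arguments the reducing hypothesis is used exactly once, to show that $P_{\cal H}$ commutes with the resolvents $(B-z)^{-1}$, via the same domain computation you give. Where you diverge is the passage from resolvents to spectral projections. The paper routes through the unitary group: it uses the limit formula $e^{itx}=\lim_{n}\left(\frac{n}{t}\left(\frac{n}{t}-ix\right)^{-1}\right)^n$ to get commutation with $e^{itB}$, then density of trigonometric polynomials on compact sets to reach $f(B)$ for continuous compactly supported $f$, and then appeals to ``standard approximation arguments.'' You instead apply Stone--Weierstrass directly to the conjugation-closed algebra generated by the resolvent functions, and you make the final approximation step explicit with a Dynkin-type argument: the class of Borel sets $\Omega$ with $P_{\cal H}P_\Omega=P_\Omega P_{\cal H}$ is a $\sigma$-algebra containing the intervals. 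This is more direct and more self-contained than the paper's sketch; note only that it relies on the multiplicativity $P_{\Omega\cap\Omega'}=P_\Omega P_{\Omega'}$ of projection-valued measures (a standard fact worth citing), and that your alternative via Stone's formula is slightly off as stated, since Stone's formula recovers $\tfrac{1}{2}(P_{[a,b]}+P_{(a,b)})$ rather than $P_{(a,b)}$ itself --- the Stone--Weierstrass route is the cleaner of your two options. For part (2), the paper notes that the scalar measures of $P|\cal H$ agree with those of $P$ on vectors of $\cal H$ and then concludes by invoking Fact \ref{noextensions} (self-adjoint operators have no proper symmetric extensions), whereas you verify equality directly: the domain $\{x\in\cal H:\int\lambda^2\,d\mu^x<\infty\}$ of $\int_{\bR}\id\,d(P|\cal H)$ is literally $D(B)\cap\cal H=D(B|\cal H)$ because the measures coincide, and then the operators agree against every $y\in\cal H$. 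Your version of this step is a genuine (if small) simplification, since it shows the appeal to Fact \ref{noextensions} is unnecessary here.
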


\begin{proof}[Proof sketch]
Item (1) is usually included as part of the Spectral theorem for unbounded self-adjoint operators and every reference we have found proves this \emph{using} the specific construction for a spectral measure for $B$.  Since we wish to avoid circular reasoning, it is important for us to note that (1) is a formal consequence of the existence of the spectral resolution for $B$ and thus we include some details.  (We thank Giorgio Metafune for assisting us with this proof sketch.)  First, since $B$ is self-adjoint, its spectrum is contained in $\mathbb R$, whence, given any real number $t$, we have that the operator $B-tiI$ has a bounded inverse, denoted $(B-tiI)^{-1}$.  Since $\cal H$ is a reducing subspace for $B$, it is readily verified that the bounded operators $P_{\mathcal H}$ and $(B-tiI)^{-1}$ commute.  Using the formula $e^{itx}=\lim_{n\to \infty}\left(\frac{n}{t}\left(\frac{n}{t}-ix\right)^{-1}\right)^n$, it follows that $P_{\cal H}$ commutes with the operator $e^{itB}$ for any $t\in \mathbb R$.  We note that linear combinations of exponential functions $e^{itx}$, when restricted to a given compact set, are dense in the set of continuous functions on that compact set.  Consequently, we see that $f(B)$ commutes with $P_{\cal H}$ for any continuous function $f$ with compact support.  Standard approximation arguments now allow us to conclude (1).

By (1), we have that $P_{\cal H}\circ (P_\Omega|\cal H)$ is a projection on $\cal H$ for each $\Omega\in \bor(\mathbb R)$.  It is readily verified that $P|\cal H$ is a projection-valued measure on $\cal H$ whose associated complex measures agree with those of $P$ itself (restricted to pairs of elements of $\cal H$).  From this and Fact \ref{noextensions}, it is readily verified that $B|\cal H=\int_{\mathbb R}\id d(P|\cal H)$.
\end{proof}

It is possible to push-forward projection-valued measures along measurable maps.  Since this construction does not appear to be well-documented in the literature, we provide some specifics here.  

\begin{defn}
Suppose that $\Phi:(X,\frak B)\to (Y,\frak C)$ is a measurable map between measurable spaces.  If $P:\frak B\to \cal B(\cal H)$ is a projection-valued measure on $(X,\frak B)$, then we define the \textbf{push-forward} of $P$ via $\Phi$ to be the projection-valued measure $\Phi_*P:\frak C\to\cal B(\cal H)$ on $(Y,\frak C)$ given by setting $(\Phi_*P)_\Omega:=P_{\Phi^{-1}(\Omega)}$ for all $\Omega\in \frak C$.
\end{defn}  

The following lemma is obvious but useful:

\begin{lem}
Suppose that $\Phi:(X,\frak B)\to (Y,\frak C)$ is a measurable map between measurable spaces and $P:\frak B\to \cal B(\cal H)$ is a projection-valued measure on $(X,\frak B)$.  For $x,y\in \cal H$, let $\mu^{x,y}$ and $\nu^{x,y}$ denote the complex measures associated to $P$ and $\Phi_*P$ respectively.  Then $\nu^{x,y}=\Phi_*\mu^{x,y}$.
\end{lem}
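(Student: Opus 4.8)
The plan is to verify the claimed equality of complex measures by checking that $\nu^{x,y}$ and $\Phi_*\mu^{x,y}$ agree on an arbitrary set $\Omega\in\frak C$; since both are complex measures on $(Y,\frak C)$, agreement on every such $\Omega$ forces their equality. So I would fix $x,y\in\cal H$ and $\Omega\in\frak C$, first observing that measurability of $\Phi$ guarantees $\Phi^{-1}(\Omega)\in\frak B$, so that every expression below is well-defined and lives in the correct $\sigma$-algebra.

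The argument would then be a direct chain of definitions. By the definition of the complex measure associated to the projection-valued measure $\Phi_*P$, we have $\nu^{x,y}(\Omega)=\langle y,(\Phi_*P)_\Omega(x)\rangle$. By the definition of the push-forward projection-valued measure, $(\Phi_*P)_\Omega=P_{\Phi^{-1}(\Omega)}$, so this equals $\langle y,P_{\Phi^{-1}(\Omega)}(x)\rangle$. Recognizing the latter as the value of $\mu^{x,y}$ on the set $\Phi^{-1}(\Omega)$, we obtain $\mu^{x,y}(\Phi^{-1}(\Omega))$, which is precisely $(\Phi_*\mu^{x,y})(\Omega)$ by the definition of the push-forward of a complex measure. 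Stringing these equalities together yields $\nu^{x,y}(\Omega)=(\Phi_*\mu^{x,y})(\Omega)$.

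As the statement already advertises, I expect no genuine obstacle here: the result is a formal unwinding of three definitions (the complex measure attached to a projection-valued measure, the push-forward of a projection-valued measure, and the push-forward of a complex measure). The only point requiring any attention is the bookkeeping that keeps each object in its correct $\sigma$-algebra, which is exactly what measurability of $\Phi$ provides.
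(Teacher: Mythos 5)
Your proof is correct: the chain $\nu^{x,y}(\Omega)=\langle y,(\Phi_*P)_\Omega(x)\rangle=\langle y,P_{\Phi^{-1}(\Omega)}(x)\rangle=\mu^{x,y}(\Phi^{-1}(\Omega))=(\Phi_*\mu^{x,y})(\Omega)$ is exactly the definitional unwinding the paper has in mind when it labels this lemma ``obvious'' and omits any proof. There is nothing to add; your bookkeeping remark about measurability of $\Phi$ is the only point of substance, and you have it right.
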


In the next lemma (and in a couple of other places in this note), we will need to make use of the \textbf{Polarization Identity}, namely, for any sesquilinear form $g$ on a complex vector space $V$, we have $g(x,y)=\frac{1}{4}\sum_{k=1}^4 i^k g(x+i^ky,x+i^ky)$ for all $x,y\in V$.  We apply this to sesquilinear forms of the form $(x,y)\mapsto \langle y,Ax\rangle$, where $A$ is a linear operator on some complex inner product space.  In particular, we see that if $A$ and $B$ are two linear operators on some complex inner product space $V$ for which $\langle x,Ax\rangle=\langle x,Bx\rangle$ for all $x\in V$, then we in fact have $\langle y,Ax\rangle=\langle y,Bx\rangle$ for all $x,y\in V$.  (Technically, we will sometimes use this identity in the case that $A$ and $B$ are linear transformations from $W$ to $V$, where $W$ is some subspace of a complex inner product space $V$.)

We now show that the usual change of variables formula for pushforward measures holds in the context of projection-valued measures. 

\begin{lem}\label{pushforward}
Suppose that $\Phi:(X,\frak B)\to (Y,\frak C)$ is a measurable map between measurable spaces and $P:\frak B\to \cal B(\cal H)$ is a projection-valued measure on $(X,\frak B)$.  Then for any measurable function $f:Y\to \mathbb R$, we have $\int_Y fd(\Phi_*P)=\int_X (f\circ \Phi)dP$ (as unbounded operators on $\cal H$).
\end{lem}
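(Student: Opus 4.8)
The plan is to verify that the two unbounded operators $A := \int_Y f\,d(\Phi_*P)$ and $B := \int_X (f\circ\Phi)\,dP$ have the same domain and agree on it, thereby reducing the statement to the classical change of variables formula for pushforwards of ordinary measures. The connecting device is the previous lemma, which identifies the complex measures attached to $\Phi_*P$ as $\nu^{x,y}=\Phi_*\mu^{x,y}$; in particular the associated positive measures satisfy $\nu^x=\Phi_*\mu^x$.

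First I would settle the domains. By definition $x\in D(A)$ exactly when $\int_Y|f|^2\,d\nu^x<\infty$ and $x\in D(B)$ exactly when $\int_X|f\circ\Phi|^2\,d\mu^x<\infty$. Applying the classical change of variables formula for the pushforward of the positive measure $\mu^x$ to the nonnegative measurable function $|f|^2$ gives $\int_Y|f|^2\,d(\Phi_*\mu^x)=\int_X(|f|^2\circ\Phi)\,d\mu^x=\int_X|f\circ\Phi|^2\,d\mu^x$. Since $\nu^x=\Phi_*\mu^x$, the two defining quantities coincide as elements of $[0,\infty]$, so $D(A)=D(B)$.

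Next I would show $A$ and $B$ agree on this common domain. Fix $x\in D(A)=D(B)$ and an arbitrary $y\in\cal H$. From $x\in D(B)$ together with the integrability remark in Section 2 (applied to the function $f\circ\Phi$), we have $f\circ\Phi\in L^1(|\mu^{x,y}|)$; this is precisely the hypothesis needed to invoke the change of variables formula for the complex measure $\mu^{x,y}$, yielding $\int_Y f\,d(\Phi_*\mu^{x,y})=\int_X(f\circ\Phi)\,d\mu^{x,y}$. Unwinding the defining formulas for the two operators, the left-hand side equals $\langle y,Ax\rangle$ and the right-hand side equals $\langle y,Bx\rangle$. Since $y$ was arbitrary, $Ax=Bx$, which completes the argument.

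The entire proof is bookkeeping layered on top of the standard pushforward change of variables formula, so I do not anticipate a genuine obstacle. The one point deserving care is that the complex-measure version of that formula requires an integrability hypothesis beyond what holds for the positive measures $\mu^x$; this is supplied exactly by the Section 2 observation that membership in the domain of $\int_X(f\circ\Phi)\,dP$ forces $f\circ\Phi\in L^1(|\mu^{x,y}|)$ for all $y$.
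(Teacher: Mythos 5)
Your proof is correct, but it follows a genuinely different route from the paper's. The paper establishes only one domain inclusion, $D\left(\int_Y f\,d(\Phi_*P)\right)\subseteq D\left(\int_X (f\circ\Phi)\,dP\right)$, proves agreement of the \emph{diagonal} matrix coefficients $\langle x,\cdot\, x\rangle$ via the change of variables formula for the positive measures $\mu^x$, upgrades this to all matrix coefficients $\langle y,\cdot\, x\rangle$ with $x,y$ in the smaller domain by the Polarization Identity, and then uses density to conclude that one operator \emph{extends} the other; equality is finally forced by the fact that both operators are self-adjoint (here the real-valuedness of $f$ is essential) together with Fact 2.1, that self-adjoint operators admit no proper symmetric extensions. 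You instead prove equality of the two domains outright (which is fine: the positive change of variables formula for $|f|^2$ is an identity in $[0,\infty]$, so no integrability hypothesis is needed), and then get $\langle y,Ax\rangle=\langle y,Bx\rangle$ for \emph{all} $y\in\cal H$ in one step by applying the change of variables formula to the complex measures $\mu^{x,y}$, with the required integrability of $f\circ\Phi$ supplied, as you correctly note, by the Section 2 remark. Your route buys two things: it never invokes polarization, density, or Fact 2.1, and it nowhere uses that $f$ is real-valued, so it proves the lemma for complex-valued $f$ as well. What it costs is reliance on the complex-measure version of the change of variables formula, which is less commonly cited than the positive version the paper uses; it is true (reduce to the positive case via the Jordan decomposition, noting $|\Phi_*\mu|\leq\Phi_*|\mu|$ so that both sides are defined), but a fully self-contained write-up should include that one-line reduction rather than treat it as a black box.
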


\begin{proof}
Suppose that $x\in D(\int_Y fd(\Phi_*P))$.  Letting $\mu^x$ denote the positive measure on $(X,\frak B)$ associated with $P$ and $x$, we have $\int_Y \|f\|^2d(\Phi_*\mu^x)<\infty$.  By the usual change of variables formula, we have $\int_X \|f\circ\Phi\|^2d\mu^x<\infty$, so $x\in D(\int_X (f\circ\Phi)dP)$.  For such an $x$, we also have
$$\left\langle x,\left(\int_Y fd(\Phi_*P)\right)x\right\rangle=\int_Y fd(\Phi_*\mu^x)=\int_X (f\circ \Phi)d\mu^x=\left\langle x,\left(\int_X (f\circ \Phi)dP\right)x\right\rangle.$$ By the Polarization Identity, we conclude that
$$\left\langle y,\left(\int_Y fd(\Phi_*P)\right)x\right\rangle=\left\langle y,\left(\int_X (f\circ \Phi)dP\right)x\right\rangle$$ holds for all $x,y\in D(\int_Y fd(\Phi_*P))$.  Since this latter domain is dense in $\cal H$, it follows that the above identity holds true for all $y\in \cal H$, whence $\int_X (f\circ \Phi)dP$ is an extension of $\int_Y fd(\Phi_*P)$.  Since $f$ is real-valued, we have that both operators are self-adjoint, whence we can conclude using Fact \ref{noextensions}.
\end{proof}


\subsection{Loeb measure}

Suppose that $\mu$ is an internal, finitely bounded positive measure on ${}^*\bor(\mathbb R)$, the algebra of internally Borel subsets of $\starR$.  In particular, $\mu(\starR)\in \fin(\starR)$ and $\mu$ is finitely additive.  One can then define a standard, finitely additive, finite positive measure $\st(\mu)$ on ${}^*\bor(\mathbb R)$ given by $\st(\mu)(\Omega):=\st(\mu(\Omega))$.  A saturation argument shows that the Carath\'eodory Extension theorem can be applied to extend $\st(\mu)$ to a genuine positive measure $\mu_L$ on $\loeb(\starR)$, the $\sigma$-algebra generated by ${}^*\bor(\mathbb R)$, called the \textbf{Loeb measure} associated to $\mu$.

It will be important for us to understand when internal functions that are integrable with respect to an internal measure have standard parts that are integrable with respect to the associated Loeb measure $\mu_L$.  In what follows, when we say that an internal function $f:\starR\to \starR$ is internally measurable, we implicitly mean with respect to the algebra ${}^*\bor(\mathbb R)$.  The easiest case in this direction is the following; for a proof, see \cite[Theorem 6.1]{ross}.

\begin{fact}\label{finiteintegral}
Fix an internal finitely bounded positive measure $\mu$ on ${}^*\bor(\mathbb R)$ and suppose that $f:\starR\to \starR$ is an internal measurable function that is finitely bounded, that is, there is $M\in \mathbb R$ such that $|f(x)|\leq M$ for all $x\in \starR$.  Then $\st(f):\starR\to \mathbb R$ is integrable with respect to $\mu_L$ and $\st(\int_{\starR}fd\mu)=\int_{\starR}\st(f)d\mu_L$.
\end{fact}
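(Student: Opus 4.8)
The plan is to prove both the measurability of $\st(f)$ and the integral identity by sandwiching $f$ between internal simple functions coming from a fixed finite partition of its (standardly bounded) range, and then matching those internal simple functions against their standard counterparts on the Loeb side. The finiteness of $\mu$ is what keeps all the error terms under control.

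First I would check that $\st(f)$ is $\mu_L$-measurable. For a standard real $r$ one has $\st(f)(x)<r$ precisely when $f(x)<r-\tfrac1n$ for some $n\in\mathbb N$, so that
$$\{x\in\starR:\st(f)(x)<r\}=\bigcup_{n\in\mathbb N}\{x\in\starR:f(x)<r-\tfrac1n\}.$$
Each set on the right is internally Borel, since $f$ is internally measurable and each $r-\tfrac1n$ belongs to $\starR$; hence the countable union lies in $\loeb(\starR)$, giving measurability. Because $\st(f)$ is bounded by $M$ and $\mu_L(\starR)=\st(\mu(\starR))$ is finite, $\st(f)$ is then automatically $\mu_L$-integrable.

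For the identity, fix a standard $\e>0$ and choose rationals $-M=a_0<a_1<\cdots<a_n=M$ with $a_k-a_{k-1}<\e$ for every $k$. Setting $A_k:=f^{-1}({}^*[a_{k-1},a_k))$ (taking the top interval $[a_{n-1},M]$ closed) gives an internally Borel partition of $\starR$, and the internal simple function $s:=\sum_{k=1}^n a_{k-1}\mathbf 1_{A_k}$ satisfies $|f-s|\le\e$ pointwise, so $\ab{\int_{\starR}f\,d\mu-\int_{\starR}s\,d\mu}\le\e\,\mu(\starR)$. On the Loeb side, regarding the $A_k$ as elements of $\loeb(\starR)$ and putting $\bar s:=\sum_{k=1}^n a_{k-1}\mathbf 1_{A_k}$, one checks that $\ab{\st(f)-\bar s}\le\e$ pointwise, whence $\ab{\int_{\starR}\st(f)\,d\mu_L-\int_{\starR}\bar s\,d\mu_L}\le\e\,\mu_L(\starR)$. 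The bridge between the two sides is that the simple integrals agree after taking standard parts: since the sum is finite and the $a_{k-1}$ are standard,
$$\st\Big(\int_{\starR}s\,d\mu\Big)=\sum_{k=1}^n a_{k-1}\,\st(\mu(A_k))=\sum_{k=1}^n a_{k-1}\,\mu_L(A_k)=\int_{\starR}\bar s\,d\mu_L.$$
Combining the three estimates yields $\ab{\st(\int_{\starR}f\,d\mu)-\int_{\starR}\st(f)\,d\mu_L}\le 2\e\,\mu_L(\starR)$, and since $\e>0$ was an arbitrary standard real and $\mu_L(\starR)$ is finite, the two quantities coincide.

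The step I expect to demand the most attention is the middle equality $\st(\mu(A_k))=\mu_L(A_k)$ in the last display, since this is the only point where the construction of the Loeb measure is genuinely invoked — namely, that $\mu_L$ restricts to $\st\circ\mu$ on the internally Borel sets. Everything else is bookkeeping with the two uniform bounds, both of which are finite precisely because $\mu$ is assumed finitely bounded.
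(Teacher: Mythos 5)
Your proof is correct. Note that the paper itself gives no proof of this statement: it is quoted as Fact \ref{finiteintegral} with a citation to Ross's survey (Theorem 6.1 there), so there is no internal argument to compare against; your write-up is essentially the standard proof one finds in that reference. All the key steps check out: $\st(f)$ is well-defined and Loeb-measurable via the countable-union description $\{\st(f)<r\}=\bigcup_n\{f<r-\tfrac1n\}$ of internally Borel sets; the partition of $[-M,M]$ has a \emph{standard} finite number of pieces, so the standard part passes through the simple-function sum $\sum_k a_{k-1}\mu(A_k)$; and the bridge identity $\st(\mu(A_k))=\mu_L(A_k)$ is exactly the defining property of the Loeb extension on internally Borel sets, as set up in the paper's Loeb measure subsection. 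The finiteness of $\mu(\starR)$ is used precisely where you say it is, to control both error terms, so the argument is complete as written.
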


A routine overflow argument shows that the assumption on $f$ in the previous fact is equivalent to the assumption that $f(x)\in \fin(\starR)$ for all $x\in \starR$.  When an internally measurable function $f:\starR\to \starR$ takes values in $\fin(\starR)$ almost everywhere, but not actually everywhere, the situation is a little more complicated (but still well-understood).  In what follows, we set $\bst:\starR\to \starR$ to be given by $\bst(x)=\st(x)$ if $x\in \fin(\starR)$ and otherwise $\bst(x)=0$.

\begin{defn}
Fix an internal finitely bounded positive measure $\mu$ on ${}^*\bor(\mathbb R)$ and suppose that $f:\starR\to \starR$ is an internally $\mu$-integrable function such that $f(x)\in \fin(\starR)$ $\mu$-almost everywhere.  We say that $f$ is \textbf{S-integrable} if $\bst(f)$ is $\mu_L$-integrable and $\st(\int_{\starR}fd\mu)=\int_{\starR}\bst(f)d\mu_L$.
\end{defn}

The key for us is the following result due to Lindstr\o{}m; see \cite[Theorem 6.3]{ross}.

\begin{fact}\label{Lindstrom}
Fix an internal finitely bounded positive measure $\mu$ on ${}^*\bor(\mathbb R)$ and suppose that $f:\starR\to \starR$ is an internally measurable function such that $|f|^p$ is $\mu$-integrable for some (standard) $p\in (1,\infty)$.  Then $f$ is S-integrable.
\end{fact}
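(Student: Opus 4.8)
The plan is to reduce to a nonnegative internal function and then let the internal Hölder inequality do the heavy lifting, since Hölder is precisely the device that converts an $L^p$-bound with $p>1$ into quantitative control on $L^1$-tails. First I would record the (standard) reading of the hypothesis: ``$|f|^p$ is $\mu$-integrable'' means $\int_{\starR}|f|^p\,d\mu$ is limited, and I set $C:=\int_{\starR}|f|^p\,d\mu$; without limitedness the statement fails, as a constant unlimited $f$ shows. Writing $f=f^+-f^-$ for the internal positive and negative parts, one has $|f^\pm|^p\le|f|^p$, so each $f^\pm$ inherits the hypothesis, while $\bst(f)=\bst(f^+)-\bst(f^-)$ pointwise and S-integrability is closed under differences. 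Hence it suffices to treat a nonnegative internal measurable $f$ with $C$ limited.

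Next I would fix the conjugate exponent $q=p/(p-1)$ and apply the internal Hölder inequality (obtained by transfer), which gives, for every internally Borel $A$, the bound $\int_A f\,d\mu\le(\int_A f^p\,d\mu)^{1/p}\mu(A)^{1/q}\le C^{1/p}\mu(A)^{1/q}$. Taking $A=\starR$ and using $\mu(\starR)\in\fin(\starR)$ shows $\int_{\starR}f\,d\mu$ is limited; put $L:=\st(\int_{\starR}f\,d\mu)$. The internal Markov inequality $\mu(\{f>n\})\le n^{-p}C$ then yields $\mu_L(\{f>n\})\le n^{-p}\st(C)$ for standard $n$, so the Loeb-measurable set $U:=\{x:f(x)\notin\fin(\starR)\}=\bigcap_n\{f>n\}$ satisfies $\mu_L(U)=0$. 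In particular $f(x)\in\fin(\starR)$ $\mu$-almost everywhere, which is exactly what is needed for S-integrability to even be asserted.

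It then remains to prove $\int_{\starR}\bst(f)\,d\mu_L=L$, and here I would truncate. For standard $M\in\bN$ put $f_M:=\min(f,M)$, a finitely bounded internal function, so Fact \ref{finiteintegral} gives $\st(\int_{\starR}f_M\,d\mu)=\int_{\starR}\st(f_M)\,d\mu_L$. Off the null set $U$ one has $\st(f_M)=\min(\bst(f),M)\uparrow\bst(f)$, so by the Monotone Convergence Theorem on $\starR\setminus U$ (the values on $U$ being irrelevant as $\mu_L(U)=0$) we obtain $\int_{\starR}\st(f_M)\,d\mu_L\to\int_{\starR}\bst(f)\,d\mu_L$; since $\int f_M\,d\mu\le\int f\,d\mu$, this limit is $\le L<\infty$, so $\bst(f)$ is $\mu_L$-integrable. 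For the reverse inequality I would bound the internal tail: $\int_{\starR}(f-f_M)\,d\mu=\int_{\{f>M\}}(f-M)\,d\mu\le\int_{\{f>M\}}f\,d\mu\le C^{1/p}\mu(\{f>M\})^{1/q}\le C\,M^{-(p-1)}$ via Hölder and Markov, and this tends to $0$ as $M\to\infty$. Hence $\st(\int f_M\,d\mu)\to L$, and comparing with the Loeb-side limit forces $\int_{\starR}\bst(f)\,d\mu_L=L=\st(\int_{\starR}f\,d\mu)$.

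The step I expect to be the main obstacle is this final reconciliation. The Loeb side is approached only monotonically from below by the truncations, so by itself it yields merely $\le L$; matching it from the internal side requires the uniform tail estimate $\int_{\{f>M\}}f\,d\mu\to0$, and it is exactly the surplus integrability from $p>1$ that makes the exponent $-(p-1)$ strictly negative and forces the tail to vanish. The other delicate point, easy to mishandle, is the bookkeeping on $U$, where $\st(f_M)=M$ diverges while $\bst(f)=0$; the whole argument only closes because $\mu_L(U)=0$, which is again a consequence of the $L^p$-bound through Markov's inequality.
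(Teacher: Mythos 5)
The paper itself gives no proof of this fact: it is quoted as Lindstr\o{}m's theorem with a pointer to \cite[Theorem 6.3]{ross}, so there is nothing internal to compare your argument against. Your proof is correct. You are also right about how the hypothesis must be read: ``$|f|^p$ is $\mu$-integrable'' has to mean that $\int_{\starR}|f|^p\,d\mu$ is \emph{limited} (this is exactly how the fact is invoked later, e.g.\ in Lemma \ref{nshullformula}, where $\int_{\starR}t^2\,d\mu^x(t)=\|Bx\|^2\in\fin(\starR)$ is checked before applying it). Your route does differ from the usual one in the literature: Ross first proves that S-integrability is equivalent to having limited internal integral together with $\int_A|f|\,d\mu\approx 0$ for every internal $A$ with $\mu(A)\approx 0$, after which the present fact is the one-line H\"older estimate $\int_A|f|\,d\mu\le C^{1/p}\mu(A)^{1/q}\approx 0$. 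Since the paper never states that small-sets characterization, you instead verify the integral-equality definition of S-integrability directly: the decomposition $f=f^+-f^-$, Markov to get $\mu_L$-almost-everywhere limitedness, truncation combined with Fact \ref{finiteintegral} and monotone convergence, and the H\"older--Markov tail bound $\int_{\{f>M\}}f\,d\mu\le C\,M^{-(p-1)}$ all check out and together give exactly what the paper's definition demands. This is longer than the textbook proof but self-contained relative to the paper's definitions; both arguments turn on the same mechanism you identify, namely that H\"older converts the surplus integrability $p>1$ into uniform control of the tails.
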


We will really only use the preceding fact in the case that $f(x)=x$ with $p=2$ and $f(x)=x^2$ with $p=1$; both of these cases are fairly easy to prove by hand.  In any event, Lindstr\o{}m's result is not very difficult and our reliance on this result should not count against the simplicity of the nonstandard arguments to follow.

Since measures associated to projection-valued measures are complex, we need a bit of complex Loeb measure/integration theory.  The literature is surprisingly sparse on this topic, so we explicitly spell out exactly what we need.  We call an internal complex measure $\mu$ on ${}^*\bor(\mathbb R)$ finitely bounded if $|\mu|$ is a finitely bounded internal (positive) measure.

\begin{defn}
Suppose that $\mu$ is an internal finitely bounded complex measure on ${}^*\bor(\mathbb R)$ and let $h:=\frac{d|\mu|}{d\mu}$ be the internal Radon-Nikodym derivative.  We then let $d\mu_L$ be the standard complex measure on $\loeb(\starR)$ satisfying $$d\mu_L=\st(h)d|\mu|_L.$$
\end{defn}

Note in the previous definition that $\st(h)$ indeed makes sense since $|h(x)|=1$ for all $x\in \Omega$.  Also note that $|\mu_L|=|\mu|_L$.  (This follows from, for example, \cite[Theorem 6.13]{rudin}.) 

The next lemma shows that our definition is a good one.

\begin{lem}\label{complexmeasure}
Suppose that $\mu$ is an internal finitely bounded complex measure on ${}^*\bor(\mathbb R)$ and $\Omega\in {}^*\bor(\mathbb R)$.  Then $\mu_L(\Omega)=\st(\mu(\Omega))$.
\end{lem}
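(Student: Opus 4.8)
The plan is to reduce the statement to the real-valued case already handled by Fact~\ref{finiteintegral}, by invoking the internal polar decomposition of $\mu$ and splitting the relevant Radon--Nikodym derivative into its real and imaginary parts.

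First I would record the structural facts about $h$. By transfer of the standard polar decomposition of a complex measure (Rudin, Theorem 6.12), the internal derivative $h$ from the definition satisfies $|h|=1$ (everywhere, after adjusting on a $|\mu|$-null set) and $d\mu = h\, d|\mu|$, so that $\mu(\Omega)=\int_\Omega h\, d|\mu|$ for every $\Omega\in{}^*\bor(\mathbb R)$. In particular $h$ is an internally measurable function that is finitely bounded (by $1$), so its real and imaginary parts $h_1=\operatorname{Re} h$ and $h_2=\operatorname{Im} h$ are internally measurable, real-valued, and each bounded by $1$. Each internal integral $\int_\Omega h_j\, d|\mu|$ is then finite, being bounded in modulus by $|\mu|(\starR)\in\fin(\starR)$.

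Next I would apply Fact~\ref{finiteintegral} to the finitely bounded internal functions $h_j\chi_\Omega$ (for $j=1,2$) with respect to the positive measure $|\mu|$. Since $\int_{\starR} h_j\chi_\Omega\, d|\mu| = \int_\Omega h_j\, d|\mu|$ and $\st(h_j\chi_\Omega)=\st(h_j)\chi_\Omega$ (using that $\chi_\Omega$ is internal and $\{0,1\}$-valued), this gives $\st\!\big(\int_\Omega h_j\, d|\mu|\big)=\int_\Omega \st(h_j)\, d|\mu|_L$ for each $j$.

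Finally I would recombine the pieces. Because both internal integrals are finite, the complex standard part splits as $\st(\mu(\Omega))=\st\!\big(\int_\Omega h_1\, d|\mu|\big)+i\,\st\!\big(\int_\Omega h_2\, d|\mu|\big)$, which by the previous step equals $\int_\Omega \st(h_1)\, d|\mu|_L + i\int_\Omega \st(h_2)\, d|\mu|_L = \int_\Omega \st(h)\, d|\mu|_L$; this last quantity is exactly $\mu_L(\Omega)$ by the definition $d\mu_L=\st(h)\, d|\mu|_L$. The main point requiring care is the bookkeeping around Fact~\ref{finiteintegral}: it is stated for integrals over all of $\starR$ and for $\starR$-valued functions, so inserting the indicator $\chi_\Omega$ (to localize to $\Omega$) and decomposing $h$ into real and imaginary parts (to pass from the real-valued fact to the complex measure $\mu$) are the two maneuvers that make the reduction go through.
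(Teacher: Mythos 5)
Your proof is correct and takes essentially the same route as the paper's: both unwind the definition $d\mu_L=\st(h)\,d|\mu|_L$ to write $\mu_L(\Omega)=\int_{\starR}\chi_\Omega\st(h)\,d|\mu|_L$ and then apply Fact \ref{finiteintegral} to the finitely bounded internal integrand $\chi_\Omega h$ against the finitely bounded positive measure $|\mu|$. Your explicit decomposition of $h$ into real and imaginary parts is a nice refinement rather than a different approach, since Fact \ref{finiteintegral} is stated only for $\starR$-valued functions and the paper applies it to the complex-valued $\chi_\Omega h$ without comment.
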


\begin{proof}
We calculate
$$\mu_L(\Omega)=\int_{\starR}\chi_\Omega d\mu_L=\int_{\starR}\chi_\Omega\st(h)d|\mu|_L=\st\left(\int_{\starR}\chi_\Omega hd|\mu|\right)=\st(\mu(\Omega)),$$ where the next-to-last equality uses the fact that $h$ is finitely bounded and $|\mu|$ is an internal finitely bounded positive measure, whence Fact \ref{finiteintegral} applies.
\end{proof}

\subsection{Nonstandard hulls}

Fix an internal Hilbert space $H$.  We set $$\fin(H):=\{x\in H \ : \ \|x\|\in \fin(\starR)\}.$$  For $x\in \fin(H)$, we note that $x\mapsto \st(\|x\|)$ is a semi-norm on $\fin(H)$.  We also set $\operatorname{mon}(H):=\{x\in \fin(H)\ : \ \|x\|\approx 0\}$ and $\cirH:=\fin(H)/\operatorname{mon}(H)$, which is then a normed space under the norm induced by the above seminorm on $\fin(H)$.  If $x\in \fin(H)$, we denote its class in $\cirH$ by $\cirx$.  For $x,y\in \fin(H)$, note that $\langle x,y\rangle\in \fin(\starR)$ and thus the internal inner product on $H$ descends to an inner product on $\cirH$ defined by $\langle \cirx,\ciry\rangle:=\st(\langle x,y\rangle)$.  It follows from saturation that $\cirH$ is actually a Hilbert space, called the \textbf{nonstandard hull} of $H$.

If $B:H\to H$ is a finitely-bounded internal operator on $H$, meaning that the internal operator norm $\|B\|$ belongs to $\fin(\starR)$, then we note that $B(\fin (H))\subseteq \fin(H)$ and that $B$ descends to a well-defined bounded operator $\cirB:\cirH\to \cirH$ given by $\cirB(\cirx):={}^{\circ}(Bx)$, called the nonstandard hull of $B$.  Note that $\|\cirB\|=\st(\|B\|)$.

\subsection{Projection-valued Loeb measures}\label{projLoeb}

Suppose that $H$ is an internal Hilbert space and $P:{}^*\bor(\mathbb R)\to \cal B(H)$ is an internal projection-valued measure on ${}^*\bor(\mathbb R)$, meaning that for each $\Omega\in{}^*\bor(\mathbb R)$, there is an internal projection $P_\Omega$ on $H$ and this measure is merely finitely additive in the sense that if $\Omega_1,\ldots,\Omega_n\in {}^*\bor(\mathbb R)$ are pairwise disjoint and $\Omega=\bigcup_{i=1}^n \Omega_i$, then $P_\Omega x=\sum_{i=1}^n P_{\Omega_i}x$ for all $x\in H$.  Since internal projections are finitely bounded, we can consider their nonstandard hulls $\cirP\in \cal B(\cirH)$.  It is readily verified that each $\cirP$ is a projection on $\cirH$.  Moreover, for each $x,y\in H$, there is an associated internal complex measure $\mu^{x,y}$ on ${}^*\bor(\mathbb R)$ given by $\mu^{x,y}(\Omega)=\langle y,P_\Omega x\rangle$.  We know that $\mu^{x,y}$ is finitely bounded since $\|x\|,\|y\|\in \fin(\starR)$ and $|\mu^{x,y}|\leq \|x\|\ \|y\|$.  

By Lemma \ref{complexmeasure}, we have that $$\mu_L^{x,y}(\Omega)=\st(\mu^{x,y}(\Omega))=\st(\langle y,P_\Omega x\rangle)=\langle \ciry,\cirP_\Omega \cirx\rangle$$ for all $\Omega\in {}^*\bor(\mathbb R)$.  However, $\mu_L^{x,y}$ is defined on all of $\loeb(\starR)$ and this suggests defining $\cirP_{\Omega}$ for all $\Omega\in \loeb(\starR)$.  Note that, by familiar properties of Loeb measure, for $\Omega\in \loeb(\mathbb R)^*$, one has $$\mu_L^x(\Omega)=\sup_{\Omega'}\mu_L^x(\Omega')=\sup_{\Omega'}\langle \cirx, \cirP_{\Omega'}\cirx\rangle=\langle \cirx,\sup_{\Omega'}\cirP_{\Omega'}\cirx\rangle,$$ where all of the suprema are over the internally Borel subsets of $\Omega$.  In the last expression, the supremum is calculated in the lattice of projections on $\cal B(\cirH)$.  Motivated by this, for $\Omega\in \loeb(\starR)$, we thus define $\cirP_{\Omega}=\sup_{\Omega'}\cirP_{\Omega'}$, where $\Omega'$ ranges over the internally Borel subsets of $\Omega$.  One then has that $\mu^x_L(\Omega)=\langle \cirx,\cirP_{\Omega}\cirx\rangle$ for all $\cirx\in \cirH$.  Moreover, by the Polarization identity, it follows that $\mu^{x,y}_L(\Omega)=\langle \ciry,\cirP_\Omega\cirx\rangle$ for all $\cirx,\ciry\in \cirH$ and $\Omega\in \loeb(\starR)$.

With this set up, the following fact is fairly routine to prove; see \cite[Theorem 2]{raab}:

\begin{fact}
The map $\cirP:\loeb(\starR)\to \cal B(\cirH)$ is a projection-valued measure on $\loeb(\starR)$.
\end{fact}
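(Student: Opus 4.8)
The plan is to reduce every axiom for a projection-valued measure to a statement about the scalar Loeb measures $\mu^x_L$ and $\mu^{x,y}_L$, exploiting the identity $\langle \ciry,\cirP_\Omega\cirx\rangle=\mu^{x,y}_L(\Omega)$ (valid for all $\Omega\in\loeb(\starR)$ and all $\cirx,\ciry\in\cirH$) established just above, together with the facts that, by the Loeb construction, $\mu^x_L$ is a genuine countably additive positive measure and $\mu^{x,y}_L$ a genuine countably additive complex measure. Axiom (1) is then nearly immediate: for internally Borel $\Omega$ it was already noted that $\cirP_\Omega$ is a projection, and the two definitions of $\cirP_\Omega$ agree there, since internal finite additivity gives $P_{\Omega'}\le P_\Omega$ for internally Borel $\Omega'\subseteq\Omega$, so the supremum defining $\cirP_\Omega$ is attained at $\Omega$ itself; for arbitrary $\Omega\in\loeb(\starR)$, $\cirP_\Omega$ is by definition a supremum of projections in the complete lattice of projections on $\cirH$, hence again a projection. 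For axiom (2), internal finite additivity forces $P_\emptyset=0$, while $P_{\starR}=I$ as part of being an internal projection-valued measure, so $\mu^x_L(\emptyset)=0$ and $\mu^x_L(\starR)=\st(\|x\|^2)=\|\cirx\|^2$; thus $\langle\cirx,\cirP_\emptyset\cirx\rangle=0$ and $\langle\cirx,\cirP_{\starR}\cirx\rangle=\langle\cirx,\cirx\rangle$ for all $\cirx$, giving $\cirP_\emptyset=0$ and $\cirP_{\starR}=I$ by the Polarization Identity.

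The substance is axiom (3), and before attacking it I would record two order-theoretic facts, both read off from the scalar measures. \emph{Monotonicity:} if $\Omega_1\subseteq\Omega_2$ are Loeb sets then $\langle\cirx,\cirP_{\Omega_1}\cirx\rangle=\mu^x_L(\Omega_1)\le\mu^x_L(\Omega_2)=\langle\cirx,\cirP_{\Omega_2}\cirx\rangle$ for every $\cirx$, so $\cirP_{\Omega_1}\le\cirP_{\Omega_2}$. \emph{Complementation:} finite additivity of $\mu^x_L$ applied to the disjoint pair $\Omega$, $\starR\setminus\Omega$ gives $\langle\cirx,(\cirP_\Omega+\cirP_{\starR\setminus\Omega})\cirx\rangle=\|\cirx\|^2$ for all $\cirx$, hence $\cirP_{\starR\setminus\Omega}=I-\cirP_\Omega$. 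Consequently, if $\Omega_m,\Omega_n$ are disjoint Loeb sets then $\Omega_m\subseteq\starR\setminus\Omega_n$, so $\cirP_{\Omega_m}\le I-\cirP_{\Omega_n}$, i.e.\ $\cirP_{\Omega_m}+\cirP_{\Omega_n}\le I$; for two projections this is equivalent to $\cirP_{\Omega_m}\cirP_{\Omega_n}=0$. Thus the hulls attached to pairwise disjoint Loeb sets are \emph{mutually orthogonal} projections.

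With orthogonality in hand I would finish as follows. Fix pairwise disjoint $(\Omega_n)_{n\in\mathbb N}$ in $\loeb(\starR)$ with union $\Omega$. Each partial sum $S_N:=\sum_{n=1}^N\cirP_{\Omega_n}$ is a projection (a finite sum of orthogonal projections) with $S_N\le\cirP_\Omega$, since each $\Omega_n\subseteq\Omega$. Hence $\|S_N\cirx\|^2=\langle\cirx,S_N\cirx\rangle=\sum_{n=1}^N\mu^x_L(\Omega_n)$, which increases to $\mu^x_L(\Omega)=\|\cirP_\Omega\cirx\|^2$ by countable additivity of $\mu^x_L$ (the last equality because $\cirP_\Omega$ is a projection); in particular $\sum_n\|\cirP_{\Omega_n}\cirx\|^2<\infty$. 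Because the vectors $\cirP_{\Omega_n}\cirx$ are pairwise orthogonal, the series $\sum_n\cirP_{\Omega_n}\cirx$ converges in $\cirH$ to some $v$. Testing against an arbitrary $\ciry\in\cirH$ and invoking countable additivity of the complex measure $\mu^{x,y}_L$ gives $\langle\ciry,v\rangle=\sum_n\langle\ciry,\cirP_{\Omega_n}\cirx\rangle=\sum_n\mu^{x,y}_L(\Omega_n)=\mu^{x,y}_L(\Omega)=\langle\ciry,\cirP_\Omega\cirx\rangle$, so $v=\cirP_\Omega\cirx$. This is exactly the required $\cirP_\Omega\cirx=\sum_n\cirP_{\Omega_n}\cirx$, establishing (3).

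I expect the only genuinely non-formal point to be this last passage: the scalar Loeb measures deliver countable additivity directly, but only in the \emph{weak} (coordinatewise) sense $\langle\ciry,\cirP_\Omega\cirx\rangle=\sum_n\langle\ciry,\cirP_{\Omega_n}\cirx\rangle$, whereas axiom (3) demands \emph{strong} (norm) convergence of $\sum_n\cirP_{\Omega_n}\cirx$. The device that bridges the gap is precisely the mutual orthogonality of the $\cirP_{\Omega_n}$ — itself a consequence of the complementation identity $\cirP_{\starR\setminus\Omega}=I-\cirP_\Omega$ — which turns the finite, summable bound on $\sum_n\|\cirP_{\Omega_n}\cirx\|^2$ into convergence of an orthogonal series whose limit is then identified with $\cirP_\Omega\cirx$.
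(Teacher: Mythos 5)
Your proof is correct. There is nothing in the paper to compare it against: the paper explicitly omits the argument, citing Raab's Theorem 2 after setting up the identities $\mu^{x,y}_L(\Omega)=\langle\ciry,\cirP_\Omega\cirx\rangle$ for $\Omega\in\loeb(\starR)$ and remarking that the fact is then routine; your writeup supplies exactly that verification, reducing each axiom to countable additivity of the scalar Loeb measures. The one point requiring a genuine idea---and the one you correctly isolate---is upgrading weak (coordinatewise) countable additivity to the strong convergence demanded by axiom (3): the complementation identity $\cirP_{\starR\setminus\Omega}=I-\cirP_\Omega$ gives mutual orthogonality of the projections attached to disjoint Loeb sets, the positive scalar measure gives $\sum_n\|\cirP_{\Omega_n}\cirx\|^2=\mu^x_L(\Omega)<\infty$, and orthogonality then yields norm convergence of $\sum_n\cirP_{\Omega_n}\cirx$, whose limit is identified as $\cirP_\Omega\cirx$ by testing against arbitrary $\ciry$ and invoking countable additivity of the complex measure $\mu^{x,y}_L$. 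One small reading note: your appeal to $P_{\starR}=I$ is indeed necessary---the paper's definition of an internal projection-valued measure in Subsection 2.5 states only finite additivity, but without $P_{\starR}=I$ the Fact would be false, so it must be read as part of the definition (and it does hold for the hyperfinite spectral measures the paper actually uses, where $P_{\starR}=\sum_i P_i=I$).
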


\section{Revisiting the bounded case}\label{bounded}

In this section, we will give the proof of the Spectral theorem for \emph{bounded} self-adjoint operators, essentially following Moore's proof from \cite{moore}, but using the language established in the previous section.  Towards that end, we fix a bounded self-adjoint operator $A$ on a Hilbert space $\cal H$.  It is routine to find an internal, hyperfinite-dimensional Hilbert space $H$ such that $\cal H\subseteq H \subseteq {}^*\cal H$.  Let $P_{H}\in \cal B({}^*\cal H)$ denote the internal orthogonal projection map onto $H$ and set $$A^H:=P_{H}\circ (A\upharpoonright H):H\to H,$$ which is an internal linear map extending $A$.  Note that, for $x\in H$, one has $\|A^Hx\|=\|P_{H}(Ax)\|\leq \|P_{H}\|\|A\|\|x\|\leq \|A\|\|x\|$ and thus $A^H$ is finitely bounded with $\|A^H\|=\|A\|$.  Consequently, one may consider the nonstandard hull $\cirA^H$ of $A^H$.  It is clear that $\cal H$ embeds in $\cirH$ as a closed subspace via $x\mapsto \cirx$ and, under this identification, $\cirA^H$ is an extension of $A$ with $\|\cirA^H\|=\|A\|$.  

Moreover, note that $A^H$ is a self-adjoint operator on $H$.  Indeed, given $x,y\in H$, by transferring the fact that $A$ is self-adjoint, we have that
$$\langle A^Hx,y\rangle=\langle P_{H}Ax,y\rangle=\langle Ax,P_{H}y\rangle=\langle Ax,y\rangle=\langle x,Ay\rangle=\langle x,P_{H}(Ay)\rangle=\langle x,A^Hy\rangle.$$ It follows immediately that $\cirA^H$ is a self-adjoint operator on $\cirH$.  By Proposition \ref{inducedpvm}, in order to construct a spectral resolution of $A$, it suffices to construct a spectral resolution of $\cirA^H$.  (It is worth noting that Proposition \ref{inducedpvm} is much simpler to prove in the bounded case and Moore makes no fuss about this step in \cite{moore}.)

Since $A^H$ is a self-adjoint operator on a hyperfinite-dimensional space, by transferring the finite-dimensional case of Theorem \ref{maintheorem}, there is a hyperfinite set $\lambda_1,\ldots,\lambda_n$ of eigenvalues of $A^H$ such that, setting $P_i$ to be the internal projection onto the eigenspace of $A^H$ corresponding to $\lambda_i$, the internal spectral resolution of $A^H$ is given by $A^H=\sum_{i=1}^n \lambda_i P_{i}$.  Using this, we can define an internal projection-valued measure $P:{}^*\bor(\mathbb R)\to \cal B(H)$ by defining $P(\Omega)=\sum_{\lambda_i\in \Omega}P_{i}$.  We note that $P$ is supported on $\sigma(A^H)\subseteq {}^*[-\|A\|,\|A\|]$, that is, $P(\sigma(A^H))=I$.  (Here, $\sigma(A^H)$ is the internal spectrum of $A^H$, namely the set of eigenvalues of $A^H$.)  As discussed in Subsection \ref{projLoeb}, we obtain a genuine projection-valued measure $\cirP:\loeb(\starR)\to \cal B(\cirH)$.

In order to obtain a projection-valued measure on $\bor(\mathbb R)$, we pushforward $\cirP$ using the standard part map.  More precisely, we consider the projection-valued measure $Q:=\bst_*\cirP:\bor(\mathbb R)\to \cal B(\cirH)$.  For $\cirx,\ciry\in \cirH$, we let $\nu^{x,y}$ denote the complex measure associated to $Q$, $\cirx$, and $\ciry$.  (Note that this abuse of notation is acceptable since the complex measures corresponding to different representatives of $\cirx$ and $\ciry$ coincide.)  Moreover, each $\nu^{x,y}$ is supported on $[-\|A\|,\|A\|]$.

The Spectral theorem for bounded self-adjoint operators follows from the following:

\begin{thm}\label{boundedres}
$Q$ is a spectral measure for $\cirA^H$.
\end{thm}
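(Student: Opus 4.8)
The plan is to verify the two defining properties of a spectral measure for $\cirA^H$: that $Q$ is a projection-valued measure on $\bor(\mathbb R)$, and that $\cirA^H=\int_{\mathbb R}\id\,dQ$. The first is immediate, since $Q=\bst_*\cirP$ is by construction the push-forward of the projection-valued measure $\cirP$ along the measurable map $\bst:(\starR,\loeb(\starR))\to(\mathbb R,\bor(\mathbb R))$, and push-forwards of projection-valued measures are again projection-valued measures. All the work therefore goes into the operator identity. My first move would be to apply Lemma \ref{pushforward} with $\Phi=\bst$ and $f=\id$ to rewrite the target as $\int_{\mathbb R}\id\,dQ=\int_{\starR}\bst\,d\cirP$, so that it suffices to prove $\cirA^H=\int_{\starR}\bst\,d\cirP$.

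Next I would observe that both operators are bounded, everywhere-defined, self-adjoint operators on $\cirH$: the operator $\cirA^H$ has norm $\|A\|$ as noted in the text, while the function $\bst$ is bounded by $\|A\|$ on the support of $\cirP$ (because $\mu^x(\starR\setminus{}^*[-\|A\|,\|A\|])=0$ for every $x$, whence the Loeb measure concentrates on ${}^*[-\|A\|,\|A\|]$), so $\int_{\starR}\bst\,d\cirP$ is self-adjoint and bounded by $\|A\|$. Since two everywhere-defined bounded operators whose associated quadratic forms agree must coincide, by the Polarization Identity it is enough to show $\langle\cirx,\cirA^H\cirx\rangle=\langle\cirx,(\int_{\starR}\bst\,d\cirP)\cirx\rangle$ for every $x\in\fin(H)$.

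For the left-hand side, by definition of the hull inner product and of $\cirA^H$ we have $\langle\cirx,\cirA^H\cirx\rangle=\st(\langle x,A^Hx\rangle)$, and the internal spectral resolution $A^H=\sum_i\lambda_iP_i$ gives $\langle x,A^Hx\rangle=\sum_i\lambda_i\langle x,P_ix\rangle=\int_{\starR}\id\,d\mu^x$, the internal integral against the internal positive measure $\mu^x$. For the right-hand side, the discussion of Subsection \ref{projLoeb} identifies the measure associated to $\cirP$ and $\cirx$ with the Loeb measure $\mu^x_L$, so $\langle\cirx,(\int_{\starR}\bst\,d\cirP)\cirx\rangle=\int_{\starR}\bst\,d\mu^x_L$. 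Thus the claim reduces to the single scalar identity $\st(\int_{\starR}\id\,d\mu^x)=\int_{\starR}\bst\,d\mu^x_L$. Since $\mu^x$ lives on ${}^*[-\|A\|,\|A\|]$, I would replace $\id$ by an internal function $g$ that agrees with it there and is globally bounded by $\|A\|$; this changes neither integral, and now Fact \ref{finiteintegral} applies directly to $g$ and $\mu^x$ to give $\st(\int_{\starR}g\,d\mu^x)=\int_{\starR}\st(g)\,d\mu^x_L$, with $\st(g)=\bst$ holding $\mu^x_L$-almost everywhere.

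I expect the heart of the argument to be this last transfer from the internal integral to the Loeb integral. In the present bounded setting it is painless precisely because the spectrum of $A^H$ is finitely bounded, so the integrand is (after harmless truncation) a finitely bounded internal function and the elementary Fact \ref{finiteintegral} suffices; the only points requiring care are the bookkeeping that lets one pass between $\id$, its truncation $g$, $\st(g)$, and $\bst$ on a set of full Loeb measure, together with the identification of Subsection \ref{projLoeb} that the complex measures attached to $\cirP$ are exactly the Loeb measures of those attached to $P$. It is worth flagging that this is precisely the step that becomes genuinely delicate once $A$ is unbounded: there the integrand is no longer finitely bounded, the operators are no longer everywhere-defined, and one must instead invoke the S-integrability result of Fact \ref{Lindstrom} together with Fact \ref{noextensions} in place of the naive Polarization argument.
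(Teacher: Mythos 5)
Your proposal is correct and takes essentially the same approach as the paper: Polarization reduces the problem to equality of quadratic forms, the internal spectral resolution gives $\langle \cirx,\cirA^H\cirx\rangle=\st\left(\int_{\starR}\id d\mu^x\right)$, and truncation to ${}^*[-\|A\|,\|A\|]$ (using that the internal measure is supported there) followed by Fact \ref{finiteintegral} finishes the computation. The only cosmetic difference is that you apply Lemma \ref{pushforward} at the operator level up front to replace $\int_{\mathbb R}\id dQ$ by $\int_{\starR}\bst d\cirP$, whereas the paper keeps $\int_{\mathbb R}\id dQ$ throughout and performs the corresponding change of variables on the scalar measures, $\int_{\starR}\bst d\mu^x_L=\int_{\mathbb R}\id d\nu^x$, at the final step.
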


\begin{proof}
Since each $\nu^{x,y}$ has compact support, we have that $\int_{\mathbb R}\id dQ$ is a bounded operator on $\cirH$.  By the Polarization identity, it suffices to show that $\langle \cirx,\cirA^H (\cirx)\rangle=\langle \cirx,(\int_{\mathbb R}\id dQ)\cirx \rangle$ for all $\cirx\in \cirH$ of unit norm.  For convenience, set $K:=\|A\|$.  We calculate
$$\langle \cirx, \cirA^H(\cirx)\rangle=\st\left(\int_{\starR}\id d\mu^x\right)=\st\left(\int_{\starR}\chi_{{}^*[-K,K]}\id d\mu^x\right)=\int_{\starR}\chi_{{}^*[-K,K]}\st d\mu^x_L.$$  Note that the last equality follows from Fact \ref{finiteintegral}.  Since ${}^*[-K,K]$ has full $\mu_L$-measure, this latter integral equals $$\int_{\starR}\bst d\mu^x_L=\int_{\mathbb R}\id d\nu^x=\left\langle \cirx,\left(\int_{\mathbb R}\id dQ\right)\cirx\right\rangle,$$ as desired.
\end{proof}


\section{Nonstandard hulls of internally bounded self-adjoint operators}

In this section, we fix an internally bounded (but not necessarily finitely bounded) self-adjoint operator $B:H\to H$ on some internal Hilbert space $H$ with internal projection-valued measure $P:{}^*\bor(\mathbb R)\to \cal B(H)$.  As in the preceding section, we consider the genuine projection-valued measure $\cirP:\loeb(\starR)\to \cal B(\cirH)$ with pushforward $Q:=\bst_*P:\bor(\mathbb R)\to \cal B(\cirH)$.  Motivated by Theorem \ref{boundedres}, we make the following definition:

\begin{defn}
The \textbf{nonstandard hull} of $B$ is the unbounded operator $\cirB$ on $\cirH$ given by $\cirB:=\int_{\mathbb R}\id dQ$.
\end{defn}

In other words, we are \emph{defining} the nonstandard hull $\cirB$ of $B$ by specifying its spectral resolution.  Theorem \ref{boundedres} shows that the two definitions of $\cirB$ agree in the case that $B$ is finitely bounded.  (Technically speaking, Theorem \ref{boundedres} dealt with the case that $B=\cirA^H$ for some bounded self-adjoint operator $A$ and some hyperfinite-dimensional space $H$ with $\cal H\subseteq H\subseteq {}^*\cal H$, but the proof works equally well for any internal finitely bounded self-adjoint operator equipped with its internal spectral resolution.)

By Lemma \ref{pushforward}, we may alternatively write $\cirB=\int_{\starR}\bst d\cirP$.  (This latter expression is how Raab defines $\cirB$ in \cite{raab}.)  Note also that $$D(\cirB)=\left\{\cirx\in \cirH \ : \ \int_{\mathbb R}t^2d\nu^x(t)<\infty\right\}=\left\{\cirx\in \cirH \ : \ \int_{\starR}\bst(t)^2d\mu^x_L(t)<\infty\right\}.$$  

The main goal in this section is Lemma \ref{nshullformula} below.  The content of that lemma is stated without proof in \cite{raab}.  We believe it is best to give a detailed proof of this important result.  Towards that end, we prove the following preliminary lemma.

\begin{lem}\label{proj}
For all $\cirx\in D(\cirB)$, we have $\cirB\cirx\in \cirP_{\fin(\starR)}(\cirH)$.
\end{lem}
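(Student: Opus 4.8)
The plan is to show that $\cirB\cirx$, which lives in $\cirH$, is fixed by the projection $\cirP_{\fin(\starR)}$, or equivalently that $\cirB\cirx$ lies in the range of this projection. First I would recall that $\cirP_{\fin(\starR)} = \sup_{\Omega'} \cirP_{\Omega'}$ where $\Omega'$ ranges over the internally Borel subsets of $\fin(\starR)$, and that the complementary projection $I - \cirP_{\fin(\starR)}$ projects onto the part of $\cirH$ supported (in the sense of the Loeb measure) on the unlimited points $\starR \setminus \fin(\starR)$. The natural strategy is therefore to compute $\langle \cirB\cirx, (I-\cirP_{\fin(\starR)})\cirB\cirx\rangle$ (or the associated measure of the unlimited set) and show it vanishes.

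The key computation would proceed through the complex measures $\mu^{x,y}_L$ on $\loeb(\starR)$ and their pushforwards $\nu^{x,y}$ under $\bst$. Since $\bst$ sends every unlimited point to $0$ by definition, I expect that the set of unlimited points is where $\bst$ loses information, and the condition $\cirx \in D(\cirB)$ — namely $\int_{\starR} \bst(t)^2 d\mu^x_L(t) < \infty$ — is exactly what controls the measure there. More concretely, I would use the formula $\cirB = \int_{\starR} \bst\, d\cirP$ together with $\mu_L^{x,y}(\Omega) = \langle \ciry, \cirP_\Omega \cirx\rangle$ to express $\cirP_{\fin(\starR)}\cirB\cirx$ via integration against $\chi_{\fin(\starR)}$. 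The point is that $\bst$ already vanishes on $\starR\setminus\fin(\starR)$, so integrating $\bst$ against $\cirP$ produces a vector whose "mass" is entirely concentrated on $\fin(\starR)$; multiplying by $\chi_{\fin(\starR)}$ should not change the integral. I would make this precise by showing $\langle \ciry, \cirP_{\fin(\starR)}\cirB\cirx\rangle = \int_{\fin(\starR)}\bst\, d\mu^{x,y}_L = \int_{\starR}\bst\, d\mu^{x,y}_L = \langle \ciry, \cirB\cirx\rangle$ for all $\ciry$, where the middle equality holds because $\bst$ is supported on $\fin(\starR)$.

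To justify that middle step rigorously I would verify that $\chi_{\fin(\starR)}\cdot \bst = \bst$ pointwise (immediate from the definition of $\bst$), so the integrands agree and hence the integrals against the complex measure $\mu^{x,y}_L$ agree. One subtlety is that $\fin(\starR) = \bigcup_n {}^*[-n,n]$ is a countable union of internally Borel sets, hence genuinely lies in $\loeb(\starR)$, so $\cirP_{\fin(\starR)}$ is well-defined and the integral $\int_{\fin(\starR)}\bst\,d\mu^{x,y}_L$ makes sense; I would note this explicitly. Once the displayed identity holds for all $\ciry \in \cirH$, it follows that $\cirP_{\fin(\starR)}\cirB\cirx = \cirB\cirx$, which is precisely the claim that $\cirB\cirx \in \cirP_{\fin(\starR)}(\cirH)$.

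The main obstacle I anticipate is handling the integrability and measurability bookkeeping carefully: I must ensure that $\bst$ is genuinely $\mu^{x,y}_L$-integrable (which requires $\cirx \in D(\cirB)$ and the total-variation bound $|\mu^{x,y}_L| \le \|\cirx\|\|\ciry\|$ together with Cauchy–Schwarz, or an appeal to Fact \ref{Lindstrom}), and that restricting the domain of integration to the Loeb-measurable set $\fin(\starR)$ interacts correctly with the definition of $\cirP_\Omega$ as a supremum of internally-indexed projections. I expect the restriction-to-$\fin(\starR)$ identity to be the genuinely delicate point, whereas the vanishing of $\bst$ off $\fin(\starR)$ and the final application of the Polarization Identity to pass from the diagonal to all $\ciry$ are routine.
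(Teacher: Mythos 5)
Your proposal is correct and follows essentially the same route as the paper: the paper's proof is exactly the displayed chain $\langle \ciry,\cirP_{\fin(\starR)}\cirB\cirx\rangle=\int_{\starR}\chi_{\fin(\starR)}\bst\, d\mu^{x,y}_L=\int_{\starR}\bst\, d\mu^{x,y}_L=\langle \ciry,\cirB\cirx\rangle$ for arbitrary $\ciry\in\cirH$, using precisely your observation that $\chi_{\fin(\starR)}\cdot\bst=\bst$ pointwise. One small remark: since your identity is established directly for all $\ciry$, the Polarization Identity you mention at the end is not actually needed here.
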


\begin{proof}
Fix $\ciry\in \cirH$.  Then 
$$\langle \ciry,\cirP_{\fin(\starR)}\cirB\cirx\rangle=\int_{\starR}\chi_{\fin(\starR)}\bst d\mu^{x,y}_L=\int_{\starR}\bst d\mu^{x,y}_L=\langle \ciry,\cirB\cirx\rangle.$$
The desired result now follows.
\end{proof}

As Raab points out, the following formula relates $\cirB\cirx$ and ${}^{\circ}(Bx)$ in the case that $x$ and $Bx$ both belong to $\fin(H)$.

\begin{lem}\label{nshullformula}
Suppose that $x$ and $Bx$ both belong to $\fin(H)$.  Then $\cirx\in D(\cirB)$ and $\cirB\cirx=\cirP_{\fin(\starR)}{}^{\circ}(Bx)$.
\end{lem}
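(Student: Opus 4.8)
The plan is to prove both assertions — membership $\cirx\in D(\cirB)$ and the identity $\cirB\cirx=\cirP_{\fin(\starR)}{}^{\circ}(Bx)$ — by a truncation argument: I restrict everything to the internally bounded regions ${}^*[-n,n]$ for standard $n$, where $\id$ and $\id^2$ are finitely bounded and Fact \ref{finiteintegral} applies directly, and then let $n\to\infty$ using $\fin(\starR)=\bigcup_{n}{}^*[-n,n]$.

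For membership, I would first transfer the standard $L^2$ spectral identity to record that, since $B$ is internally everywhere-defined and self-adjoint with internal measure $\mu^x(\Omega)=\langle x,P_\Omega x\rangle$, one has $\|Bx\|^2=\int_{\starR}t^2\,d\mu^x$. Because $Bx\in\fin(H)$, this forces $\int_{\starR}t^2\,d\mu^x\in\fin(\starR)$. For each standard $n$ the function $\chi_{{}^*[-n,n]}\,t^2$ is finitely bounded, so Fact \ref{finiteintegral} gives $\int_{{}^*[-n,n]}\bst(t)^2\,d\mu^x_L=\st\!\left(\int_{{}^*[-n,n]}t^2\,d\mu^x\right)\le\st(\|Bx\|^2)$, where I use that $\bst(t)^2=\st(t^2)$ on ${}^*[-n,n]$. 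Monotone convergence as $n\to\infty$ then yields $\int_{\starR}\bst(t)^2\,d\mu^x_L\le\st(\|Bx\|^2)<\infty$, so $\cirx\in D(\cirB)$. (This could instead be deduced from S-integrability, Fact \ref{Lindstrom}, applied to $\id$ with $p=2$, but the monotone truncation keeps the argument self-contained.)

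For the identity, since it suffices to check $\langle\ciry,\cirB\cirx\rangle=\langle\ciry,\cirP_{\fin(\starR)}{}^{\circ}(Bx)\rangle$ for all $\ciry\in\cirH$, I would match the two sides through their truncations. On the right, each $\cirP_{{}^*[-n,n]}$ is the nonstandard hull of the internal projection $P_{{}^*[-n,n]}$, so $\cirP_{{}^*[-n,n]}{}^{\circ}(Bx)={}^{\circ}(P_{{}^*[-n,n]}Bx)$; transferring the internal relation $\langle y,P_{{}^*[-n,n]}Bx\rangle=\int_{{}^*[-n,n]}t\,d\mu^{x,y}$ then gives $\langle\ciry,\cirP_{{}^*[-n,n]}{}^{\circ}(Bx)\rangle=\st\!\left(\int_{{}^*[-n,n]}t\,d\mu^{x,y}\right)$. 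The crux is a complex analogue of Fact \ref{finiteintegral} on ${}^*[-n,n]$: writing $h=\tfrac{d\mu^{x,y}}{d|\mu^{x,y}|}$ and recalling $d\mu^{x,y}_L=\st(h)\,d|\mu^{x,y}|_L$, the integrand $\chi_{{}^*[-n,n]}\,t\,h$ is finitely bounded, so Fact \ref{finiteintegral} applied to the positive measure $|\mu^{x,y}|$ gives $\st\!\left(\int_{{}^*[-n,n]}t\,d\mu^{x,y}\right)=\int_{{}^*[-n,n]}\bst(t)\,\st(h)\,d|\mu^{x,y}|_L=\int_{{}^*[-n,n]}\bst\,d\mu^{x,y}_L$. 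Thus the two truncations coincide.

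Finally I would pass to the limit. On the left, countable additivity of $\cirP$ together with $\fin(\starR)=\bigcup_n{}^*[-n,n]$ gives $\cirP_{{}^*[-n,n]}\to\cirP_{\fin(\starR)}$ in the strong operator topology, so the left side tends to $\langle\ciry,\cirP_{\fin(\starR)}{}^{\circ}(Bx)\rangle$. On the right, $\cirx\in D(\cirB)=D(\int_{\starR}\bst\,d\cirP)$ forces $\bst\in L^1(|\mu^{x,y}_L|)$ for every $\ciry$, so dominated convergence (with $\bst=0$ off $\fin(\starR)$) gives $\int_{{}^*[-n,n]}\bst\,d\mu^{x,y}_L\to\int_{\starR}\bst\,d\mu^{x,y}_L=\langle\ciry,\cirB\cirx\rangle$, using $\cirB=\int_{\starR}\bst\,d\cirP$ via Lemma \ref{pushforward}. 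Equating the limits yields the claim for all $\ciry$, hence $\cirB\cirx=\cirP_{\fin(\starR)}{}^{\circ}(Bx)$; this is moreover consistent with Lemma \ref{proj}, which already places $\cirB\cirx$ in the range of $\cirP_{\fin(\starR)}$. I expect the main obstacle to be precisely the complex Loeb computation in the third paragraph together with the two simultaneous limit passages in the fourth: because $\ciry$ need not lie in $\cirP_{\fin(\starR)}(\cirH)$ and $By$ need not be finite, no global S-integrability of $\id$ against $\mu^{x,y}$ is available, so the truncation and its careful $n\to\infty$ analysis cannot be bypassed.
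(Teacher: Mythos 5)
Your proof is correct and follows essentially the same route as the paper's: truncate to ${}^*[-n,n]$, identify $\langle\ciry,\cirP_{{}^*[-n,n]}{}^{\circ}(Bx)\rangle$ with $\st\bigl(\int_{{}^*[-n,n]}t\,d\mu^{x,y}\bigr)$ by transfer, convert this via the Radon--Nikodym derivative $h$ and Fact \ref{finiteintegral} into $\int_{{}^*[-n,n]}\bst\,d\mu^{x,y}_L$, and then pass to the limit using strong convergence of $\cirP_{{}^*[-n,n]}$ to $\cirP_{\fin(\starR)}$ on one side and dominated convergence (licensed by $\bst\in L^1(|\mu^{x,y}_L|)$, which follows from $\cirx\in D(\cirB)$) on the other. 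The only cosmetic differences are that you establish $\cirx\in D(\cirB)$ by a monotone-convergence truncation rather than by citing Fact \ref{Lindstrom} (arguably cleaner, since what is needed is integrability of $\bst^2$ rather than S-integrability of $\id$), and that you conclude directly from nondegeneracy of the inner product against all $\ciry$, whereas the paper phrases the conclusion through orthogonality to the range of $\cirP_{\fin(\starR)}$ together with Lemma \ref{proj}; both are sound.
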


\begin{proof}
Since $\int_{\starR}t^2d\mu^x(t)=\|Bx\|^2\in \fin(\starR)$, Fact \ref{Lindstrom} implies that $\bst^2$ is $\mu^x_L$-integrable, whence $\cirx\in D(\cirB)$.

Now fix $\ciry\in\cirH$.  We then have
\begin{align}
    \langle\cirP_{\fin(\starR)}\ciry,{}^{\circ}(Bx)\rangle&=\langle \ciry,\cirP_{\fin(\starR)}{}^{\circ}(Bx)\rangle \notag \\
    &=\lim_{n\to \infty}\langle \ciry,\cirP_{{}^*[-n,n]}{}^{\circ}(Bx)\rangle \notag \\ 
    &=\lim_{n\to \infty}\st\langle y,P_{{}^*[-n,n]}(Bx)\rangle \notag \\
    &=\lim_{n\to \infty}\st\left(\int_{{}^*[-n,n]}\id d\mu^{x,y}\right) \notag \\
    &=\lim_{n\to \infty}\st\left(\int_{{}^*[-n,n]}th(t)d|\mu^{x,y}|(t)\right) \notag \\
    &=\lim_{n\to \infty}\int_{{}^*[-n,n]}\st(t)\st(h(t))d(|\mu^{x,y}|_L)(t) \notag \\
    &=\int_{\starR}\chi_{\fin(\starR)}\cdot \bst\cdot \st(h)d(|\mu^{x,y}|_L) \notag \\
    &=\int_{\starR}\bst d\mu^{x,y}_L\notag \\
    &=\int_{\mathbb R}\id d\nu^{x,y}\notag \\
    &=\langle \ciry,\cirB\cirx\rangle\notag \\
    &=\langle \cirP_{\fin(\starR)}\ciry,\cirB\cirx\rangle.\notag \\ \notag
\end{align}
In the above, $h=\frac{d|\mu^{x,y}|}{d\mu^{x,y}}$.  We note that the second equality follows from the fact that the sequence of projections $\cirP_{{}^*[-n,n]}$ converges in the strong operator topology to $\cirP_{\fin(\starR)}$, the sixth equality follows the fact that $h$ is internally bounded and Fact \ref{finiteintegral}, the seventh equality follows from the Dominated Convergence theorem together with the fact that $\bst\in L^1(|\mu_L^{x,y}|)$  as $\cirx\in D(\cirB)$, while the final equality follows from Lemma \ref{proj}.



We conclude that ${}^{\circ}(Bx)-\cirB\cirx\perp \cirP_{\fin(\starR)}(\cirH)$.  By Lemma \ref{proj}, the statement of the current lemma follows.
\end{proof}

\section{The proof of the unbounded case}

In this section, we complete the proof of Theorem \ref{maintheorem}.  Towards that end, we fix a symmetric operator $A$ on $\cal H$ and let $H$ be a hyperfinite-dimensional subspace of ${}^*\cal H$ such that $D(A)\subseteq H\subseteq {}^*D(A)$.  As in Section \ref{bounded}, we set $A^H:=P_{H}\circ(A\upharpoonright H)$, where $P_{H}\in \cal B({}^*\cal H)$ is the internal orthogonal projection.  Arguing as in Section \ref{bounded}, one sees that $A^H$ is symmetric.  It follows that $A^H$ is internally bounded and self-adjoint.  (Here we are transferring the fact that a symmetric operator whose domain is the entire Hilbert space is necessarily bounded and self-adjoint; see \cite[X.2.4(c)]{conway}.)

In the proof of the next result, we need to remark that if $x\in \cal H$, then $P_{H}x\approx x$.  To see this, simply note that, since $D(A)$ is dense in $\cal H$, for each (standard) $n\in \mathbb N$, there is $y\in D(A)\subseteq H$ such that $\|x-y\|<\frac{1}{n}$; thus, by overflow, there is $N\in \mathbb N^*\setminus \mathbb N$ for which there is $z\in H$ satisfying $\|x-z\|<\frac{1}{N}$.  Consequently, $\|x-P_{H}x\|\leq \|x-z\|<\frac{1}{N}$, proving the claim. 



As we saw in Section \ref{bounded}, the statement of the next result is clear in the case that $A$ is bounded.  However, since $\cirA^H$ was defined in terms of its spectral resolution, the statement is far from obvious.

\begin{thm}
In the set-up above, we have $A\subseteq \cirA^H$.
\end{thm}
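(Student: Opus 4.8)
The plan is to fix $x\in D(A)$ and show that $\cirx\in D(\cirA^H)$ together with $\cirA^H\cirx={}^{\circ}(A^Hx)$. Since the remark preceding the statement gives $P_H(Ax)\approx Ax$, the class ${}^{\circ}(A^Hx)={}^{\circ}(P_H(Ax))$ is exactly the image of $Ax\in\cal H$ under the isometric embedding $\cal H\hookrightarrow\cirH$, $\psi\mapsto{}^{\circ}(P_H\psi)$; and for $x\in D(A)\subseteq H$ one has $P_Hx=x$, so $\cirx$ is the image of $x$. Thus the asserted identity is precisely $A\subseteq\cirA^H$. First I would verify the hypotheses of Lemma \ref{nshullformula} for $B:=A^H$: as $D(A)\subseteq H$ we have $x\in H$ with $\|x\|$ standard, so $x\in\fin(H)$, while $A^Hx=P_H(Ax)$ satisfies $\|A^Hx\|\le\|Ax\|\in\fin(\starR)$, so $A^Hx\in\fin(H)$ as well. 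Lemma \ref{nshullformula} then immediately yields $\cirx\in D(\cirA^H)$ and $\cirA^H\cirx=\cirP_{\fin(\starR)}\,{}^{\circ}(A^Hx)$.

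What remains -- and this is the entire content of the theorem -- is to delete the projection, i.e.\ to show that ${}^{\circ}(A^Hx)$ already lies in the range of $\cirP_{\fin(\starR)}$. Writing $w:=A^Hx$ and letting $\mu^w$ be the positive internal measure $\Omega\mapsto\|P_\Omega w\|^2$, the defining property of $\cirP$ from Subsection \ref{projLoeb} together with Lemma \ref{complexmeasure} identifies $\|(I-\cirP_{\fin(\starR)})\,{}^{\circ}w\|^2$ with $\mu^w_L(\starR\setminus\fin(\starR))=\lim_{n}\st\!\big(\|P_{\{|t|>n\}}w\|^2\big)$, the limit being over standard $n$ (as $\starR\setminus\fin(\starR)=\bigcap_n\{|t|>n\}$). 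So the goal reduces to the tail estimate $\lim_{n}\st(S_n)=0$, where $S_n:=\|P_{\{|t|>n\}}A^Hx\|^2$.

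I expect this tail estimate to be the crux, and it is exactly where the symmetry of $A$ must enter. The naive routes fail: a Chebyshev/second-moment bound only controls the tail of the measure attached to $x$, not that of the measure attached to $A^Hx$, and pushing one power of $A^H$ across an inner product leaves one needing $\|(A^H)^2x\|\in\fin(\starR)$, which is false for a general $x\in D(A)$. The device I would use instead is to test the high-frequency vector $v_n:=P_{\{|t|>n\}}A^Hx=A^Hz_n$, where $z_n:=P_{\{|t|>n\}}x$, against genuine vectors of $\cal H$. Since $P_{\{|t|>n\}}$ commutes with $A^H$, one has $S_n=\|v_n\|^2\ge n^2\|z_n\|^2$, hence $\|z_n\|\le\|A^Hx\|/n\to0$; and since $v_n\in H$ and $A^Hx=P_H(Ax)$, one also has $S_n=\langle v_n,A^Hx\rangle=\langle v_n,Ax\rangle$, a pairing with the \emph{standard} vector $Ax$. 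For any $\phi\in D(A)\subseteq H$, self-adjointness of $A^H$ on $H$ gives $\langle v_n,\phi\rangle=\langle A^Hz_n,\phi\rangle=\langle z_n,A^H\phi\rangle=\langle z_n,A\phi\rangle$, which is bounded by $\|z_n\|\,\|A\phi\|\to0$.

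Finally I would combine this with the density of $D(A)$ in $\cal H$. Given standard $\e>0$, choose $\phi\in D(A)$ with $\|Ax-\phi\|$ small and split $S_n=\langle v_n,\phi\rangle+\langle v_n,Ax-\phi\rangle$. Here $|\langle v_n,Ax-\phi\rangle|\le\|v_n\|\,\|Ax-\phi\|\le\|A^Hx\|\,\|Ax-\phi\|$ is uniformly (in $n$) small, while $\langle v_n,\phi\rangle\to0$ by the previous paragraph; taking standard parts gives $\limsup_n\st(S_n)\le\st(\|A^Hx\|)\,\|Ax-\phi\|=\|Ax\|\,\|Ax-\phi\|$. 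Since $\phi$ may be taken arbitrarily close to $Ax$, this forces $\lim_n\st(S_n)=0$. Hence $\cirP_{\fin(\starR)}\,{}^{\circ}(A^Hx)={}^{\circ}(A^Hx)$, and combined with the first paragraph we conclude $\cirA^H\cirx={}^{\circ}(A^Hx)$, that is, $A\subseteq\cirA^H$. Everything outside the third paragraph is bookkeeping with the Loeb-measure machinery already in place.
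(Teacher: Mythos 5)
Your proposal is correct, and while it opens the same way as the paper, it completes the argument by a genuinely different mechanism. Both proofs begin by applying Lemma \ref{nshullformula} to get $\cirx\in D(\cirA^H)$ and $\cirA^H\cirx=\cirP_{\fin(\starR)}\,{}^{\circ}(A^Hx)$, and both then must show the projection is redundant; the difference is how. The paper never touches the tail of the spectral measure of $A^Hx$: it computes the quadratic form $\langle x,\cirA^H x\rangle=\st\langle x,A^Hx\rangle$ via S-integrability (Fact \ref{Lindstrom}), upgrades this to a bilinear identity on $D(A)\times D(A)$ by polarization, takes a supremum over the unit ball of $D(A)$ to get $\|Ax\|\le\|\cirA^H(x)\|$, and then combines this with the reverse inequality $\|\cirA^H(x)\|=\|\cirP_{\fin(\starR)}\,{}^{\circ}(A^Hx)\|\le\|{}^{\circ}(A^Hx)\|=\|Ax\|$ coming from Lemma \ref{nshullformula}; equality of norms under a projection forces $\cirP_{\fin(\starR)}$ to fix ${}^{\circ}(A^Hx)$. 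You instead prove directly that the Loeb measure of $\starR\setminus\fin(\starR)$ attached to $w=A^Hx$ vanishes, via the internal identities $v_n=P_{\{|t|>n\}}A^Hx=A^Hz_n$, the Chebyshev bound $\|z_n\|\le\|A^Hx\|/n$, the pairing $S_n=\langle v_n,Ax\rangle$, and internal self-adjointness of $A^H$ to move $A^H$ onto test vectors $\phi\in D(A)$ --- all your individual steps check out (including the identification $\|(I-\cirP_{\fin(\starR)}){}^{\circ}w\|^2=\mu^w_L(\starR\setminus\fin(\starR))=\lim_n\st(S_n)$, which follows from the Fact in Subsection \ref{projLoeb} and continuity from above). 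What each buys: the paper's route is shorter and recycles the integration machinery already in place (Lindstr\o{}m's theorem plus polarization), but the cancellation of the projection appears as a somewhat magical norm coincidence; your route is more hands-on and quantitative (an explicit $O(1/n)$ tail bound plus a density error term), and it isolates exactly where the symmetry of $A$ enters, at the cost of redoing some measure-theoretic bookkeeping that the paper's duality argument sidesteps.
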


\begin{proof}
Fix $x\in D(A)$.  By the remark preceding this proof, we have $A^Hx\approx Ax$, whence Lemma \ref{nshullformula} implies that $x\in D(\cirA^H)$.  Now
$$\langle x,\cirA^H(x)\rangle=\int_{\mathbb R}\id d\nu^x=\int_{\starR}\bst d\mu^x_L=\st\left(\int_{\starR} \id d\mu^x\right)=\st\left(\langle x,A^H x\rangle\right)=\langle x,{}^{\circ}(A^Hx)\rangle.$$ Note that the third equality follows from Fact \ref{Lindstrom}.  Using the Polarization identity, we get that $\langle y,\cirA^H(x)\rangle=\langle y,{}^{\circ}(A^Hx)\rangle$ for all $x,y\in D(A)$.  Using that $D(A)$ is dense in $\cal H$, we see that
\begin{align}
    \|Ax\|&=\|{}^{\circ}(A^Hx)\| \notag \\
    &=\sup_{y\in D(A)_1}|\langle y,{}^{\circ}(A^Hx)\rangle| \notag \\
    &=\sup_{y\in D(A)_1}|\langle y,\cirA^H(x))\rangle| \notag \\
    &\leq \sup_{y\in \cirH_1}|\langle y,\cirA^H(x)\rangle| \notag \\
    &=\|\cirA^H(x)\|.\notag 
\end{align}

Here, the subscript $1$ indicates that we are only suping over the unit balls of the corresponding spaces.  The displayed inequality together with Lemma \ref{nshullformula} implies the desired result. 
\end{proof}

In the context above, we refer to $\cirA^H$ as a \textbf{hull extension} of $A$.  (Raab calls this a nonstandard extension of $A$ in \cite{raab}, but we find this confusing as it is not literally the same object as the actual nonstandard extension of $A$.)  The previous theorem justifies the word ``extension'' in the term hull extension.

The proof of Theorem \ref{maintheorem} now follows.  Indeed, we can choose $H$ above to be hyperfinite-dimensional, whence we know that the spectral resolution for $A^H$ already exists by transfer as in our proof of the bounded case.  If $A$ is assumed to be self-adjoint, then $A=\cirA^H|\cal H$ by Proposition \ref{selfadjointext} and thus the existence of the spectral resolution of $A$ follows from Proposition \ref{inducedpvm}.

\begin{remark}
The above discussion yields a nonstandard proof of the well-known fact that every symmetric operator has a self-adjoint extension on a larger Hilbert space.  
\end{remark}

Using the above analysis, we can also obtain a nonstandard criterion for when a symmetric operator $A$ on $\cal H$ has a self-adjoint extension on $\cal H$ itself:  

\begin{cor}
If $A$ is a symmetric operator on $\cal H$, then there is a self-adjoint operator on $\cal H$ extending $A$ if and only if there is a hull extension $\cirA^H$ of $A$ such that $\cal H$ is reducing for $\cirA^H$.
\end{cor}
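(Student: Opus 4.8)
We argue the two implications separately, writing $P_{\cal H}\in\cal B(\cirH)$ for the orthogonal projection of $\cirH$ onto the canonical copy of $\cal H$.

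Suppose first that $\cal H$ is reducing for some hull extension $\cirA^H$ of $A$. As recalled in Section 2.1, when a closed subspace is reducing for a self-adjoint operator the restriction to that subspace is again self-adjoint; hence $\cirA^H|\cal H$ is a self-adjoint operator on $\cal H$. Moreover it extends $A$: by the previous theorem we have $A\subseteq\cirA^H$, so for every $x\in D(A)$ we have $x\in \cal H\cap D(\cirA^H)=D(\cirA^H|\cal H)$ and $(\cirA^H|\cal H)x=\cirA^H x=Ax$. Thus $A$ admits a self-adjoint extension on $\cal H$, as required.

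Conversely, suppose $C$ is a self-adjoint operator on $\cal H$ with $A\subseteq C$. I claim it suffices to produce a single hyperfinite-dimensional $H$ with $D(A)\subseteq H\subseteq{}^*D(A)$ whose hull extension satisfies the stronger condition $C\subseteq\cirA^H$. Indeed, granting this, Proposition \ref{selfadjointext} --- applied to the self-adjoint operators $C$ on $\cal H$ and $\cirA^H$ on $\cirH$ --- yields at once that $\cal H$ is reducing for $\cirA^H$ (and that $C=\cirA^H|\cal H$), which is exactly what we want. So the whole problem is to choose the hyperfinite space $H$ well. Observe that, because $A\subseteq C$ gives ${}^*A\subseteq{}^*C$, for any $H\subseteq{}^*D(A)$ we have $A^H=P_H\circ({}^*A\upharpoonright H)=P_H\circ({}^*C\upharpoonright H)$; that is, $A^H$ is nothing but the hyperfinite compression of the internal self-adjoint operator ${}^*C$ to $H$. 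The guiding heuristic is then clear: one must take $H$ fine enough that this compression, after passing to the nonstandard hull, recaptures all of $C$.

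To establish $C\subseteq\cirA^H$ I would fix $y\in D(C)$ and, using Lemmas \ref{proj} and \ref{nshullformula}, seek $\xi\in H$ with $\xi\approx y$ and $\|A^H\xi\|\in\fin(\starR)$ such that the finite-spectral part $\cirP_{\fin(\starR)}{}^{\circ}(A^H\xi)$ equals $Cy$; Lemma \ref{nshullformula} would then give ${}^{\circ}\xi\in D(\cirA^H)$ and $\cirA^H\,{}^{\circ}\xi={}^{\circ}(Cy)=Cy$, whence $C\subseteq\cirA^H$ since ${}^{\circ}\xi={}^{\circ}y$. The main obstacle --- and essentially the entire content of this direction --- is that $D(A)$ need not be a core for $C$, so one cannot hope to find $\xi$ with the pair $(\xi,{}^*C\xi)$ infinitely close to $(y,Cy)$: the $\cal H$-shadow of the internal graph of ${}^*A$ is only the graph of the closure $\bar A$, not of $C$. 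The device that circumvents this is precisely the compression $P_H$ built into $A^H$: although ${}^*C\xi$ may have infinite norm --- its uncontrolled part reflecting exactly the deficiency directions of $A$ that are absent from ${}^*D(A)$ --- the component of ${}^*C\xi$ orthogonal to $H$ can be made to absorb that infinite part, leaving $P_H({}^*C\xi)\approx Cy$. Arranging this simultaneously for all $y\in D(C)$ with one hyperfinite $H\subseteq{}^*D(A)$ is the crux, and it is here that the hypothesis that a self-adjoint extension $C$ exists (equivalently, that $A$ has equal deficiency indices) must enter, as a hyperfinite incarnation of von Neumann's extension theory. I expect this balancing of the infinite-spectral (deficiency) part of $A^H$ against the copy of $\cal H$ to be the step demanding the most care; once such an $H$ is in hand, the verification that $C\subseteq\cirA^H$ is the routine computation sketched above, and Proposition \ref{selfadjointext} then closes the argument.
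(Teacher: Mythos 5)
Your right-to-left implication is correct and complete: it is exactly the combination of the theorem $A\subseteq\cirA^H$ with the fact that the restriction of a self-adjoint operator to a reducing subspace is self-adjoint. The left-to-right implication, however, is not a proof but a plan: after the (valid) reduction via Proposition \ref{selfadjointext} to producing a hyperfinite $H$ with $D(A)\subseteq H\subseteq{}^*D(A)$ and $C\subseteq\cirA^H$, you never produce such an $H$ --- the construction is exactly what you defer (``the crux,'' ``the step demanding the most care''). Since that construction is the entire content of this direction, the gap is genuine.

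Worse, the plan cannot be carried out for the operator $C$ you fixed, so no cleverness in choosing $H$ can close the gap. Take $\cal H=L^2(0,1)$, $A=-d^2/dx^2$ on $D(A)=C_c^\infty(0,1)$, and let $C$ be the periodic extension, so that $y:=\cos(2\pi x)\in D(C)$, $Cy=4\pi^2y$, while $y\notin H^1_0(0,1)$. Let $H$ be any space with $D(A)\subseteq H\subseteq{}^*D(A)$, let $P_H$ be the projection onto $H$, let $P$ be the internal spectral measure of $A^H$, and set $\hat y:=P_Hy\approx y$. The key point is that compression cannot hide the quadratic form: for $\xi\in H$ one has $\langle\xi,A^H\xi\rangle=\langle P_H\xi,{}^*A\xi\rangle=\langle\xi,{}^*A\xi\rangle=\|\xi'\|^2$ by transfer of integration by parts, which is precisely what your heuristic (``the component orthogonal to $H$ absorbs the infinite part'') ignores. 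Now suppose $C\subseteq\cirA^H$. Then $\cirA^Hy=4\pi^2y$, so $y$ is an eigenvector of $\cirA^H=\int_{\mathbb R}\id\,dQ$ and hence $Q_{\{4\pi^2\}}y=y$. Since $Q=\bst_*\cirP$ and $\bst^{-1}(\{4\pi^2\})$ is the monad of $4\pi^2$, which is contained in $\Omega_0:={}^*(4\pi^2-1,4\pi^2+1)$, this forces $\mu_L^{\hat y}(\starR\setminus\Omega_0)=0$, i.e.\ $\mu^{\hat y}(\starR\setminus\Omega_0)\approx0$. The internal truncation $\eta:=P_{\Omega_0}\hat y\in H$ then satisfies $\eta\approx y$ and $\|\eta'\|^2=\langle\eta,A^H\eta\rangle=\int_{\Omega_0}t\,d\mu^{\hat y}(t)\le(4\pi^2+1)\|\hat y\|^2\in\fin(\starR)$; transferring down, one obtains $w_n\in C_c^\infty(0,1)$ with $w_n\to y$ in $L^2$ and $\|w_n'\|$ bounded, whence $y\in H^1_0(0,1)$ --- a contradiction. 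So for this pair $(A,C)$ there is \emph{no} hull extension of $A$ with $C\subseteq\cirA^H$. The corollary survives because it asserts only that \emph{some} hull extension has $\cal H$ reducing, and the short argument exploits exactly the freedom your formulation discards: run the Section 5 construction on the symmetric operator $C$ itself, choosing $H$ with $D(C)\subseteq H\subseteq{}^*D(C)$. The theorem of Section 5 (applied to $C$) gives $C\subseteq{}^{\circ}C^H$, hence $A\subseteq{}^{\circ}C^H$, and Proposition \ref{selfadjointext}, applied to $C$ and ${}^{\circ}C^H$, shows that $\cal H$ is reducing for ${}^{\circ}C^H$. In other words, the witnessing hull extension must be built over the domain of a self-adjoint extension (equivalently, one must realize a well-chosen extension, not the given one); insisting on $H\subseteq{}^*D(A)$ \emph{and} on recapturing the given $C$ is what makes your version of the problem unsolvable.
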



\end{document}